\documentclass{article}
\usepackage[utf8]{inputenc}
\usepackage[a4paper,margin=23mm,top=30mm]{geometry}
\usepackage{amsfonts, amsmath, amssymb, amsgen, amsthm, amscd,bm,caption,color,comment,enumerate,enumitem,graphicx,hyperref,lmodern,mathrsfs,mathtools,slashed,subcaption,setspace}
\usepackage{arydshln,blkarray}
\usepackage{algorithm}
\usepackage{algpseudocode}
\usepackage{pgfplots}
\pgfplotsset{compat=1.18}
\usepackage{stmaryrd} 
\usepackage[toc,page]{appendix}

\allowdisplaybreaks 
\usepackage{tikz}
\usepackage{ifthen}
\usetikzlibrary{decorations.pathreplacing}

\theoremstyle{plain}
\newtheorem{theorem}{Theorem}
\newtheorem{lemma}{Lemma}
\newtheorem{proposition}{Proposition}

\theoremstyle{definition}
\newtheorem{definition}{Definition}

\newtheorem{assumption}{Assumption}
\newtheorem{example}{Example}

\theoremstyle{remark}
\newtheorem{remark}{Remark}

\newcommand{\ch}[4]{\mathscr{H}_{{#1},\bm{#2}}\left(#4\right)}
\newcommand{\ct}[4]{\mathscr{T}_{{#1},\bm{#2}}\left(#4\right)}

\title{Construction of Lyapunov density for nonautonomous dynamical systems on hypertorus}
\author{
Swapnil Tripathi\footnote{Corresponding author: swap\textunderscore trip@outlook.com}
\ and \"{O}zkan Karabacak\\
Department of Computer Engineering\\
Kadir Has University, Istanbul, Türkiye
}

\begin{document}
\maketitle
\begin{abstract}
We present a semidefinite programming framework for constructing time-varying Lyapunov densities for nonautonomous dynamical systems on a hypertorus. The formulation leverages Gram matrix representations of hybrid (real–trigonometric) polynomials. In addition, we introduce a novel block decomposition of these Gram representations to confine the blow-up of the resulting density to a prescribed set. The results are then applied to establish the almost global synchronization of a time-varying Kuramoto model and the robust almost-global stability of a parameter-varying nonautonomous system. These examples demonstrate the applicability of the proposed method and validate the theoretical results. All computational results are obtained using an open-source MATLAB implementation, as referenced in the text, thereby facilitating reproducibility of the reported examples.
\end{abstract}

\section{Introduction}

Lyapunov density (or dual Lyapunov) methods have emerged as a powerful tool for certifying almost-global stability of dynamical systems. In Rantzer’s seminal result~\cite{rantzer2001dual}, the existence of a density function, which is a positive function (away from the origin) whose divergence along the autonomous flow is positive almost everywhere, is sufficient for almost all trajectories to converge to the origin. The Lyapunov density, which can be interpreted in terms of the continuity equation from fluid mechanics~\cite{meinsma2006rantzer} even on general manifolds~\cite{angeli2003some}, is a perspective dual to that of conventional global Lyapunov functions, which may not exist for systems with complex geometry or with oscillatory modes. Motivated by this dual viewpoint, subsequent works have focused on extending Rantzer’s framework beyond autonomous systems. Initially, Monzón~\cite{monzon2006almost} obtained a sufficient condition for almost-global stability of nonautonomous systems having a locally stable equilibrium point at the origin. Subsequently, Masubuchi and Kikuchi~\cite{masubuchi2017lyapunov,masubuchi2021lyapunov} obtained a sufficient condition without assuming local stability and forward-completeness of the system. In a recent article, Masubuchi~\cite{masubuchi2023extended} introduced additional freedom in the inequalities of Lyapunov densities.

Despite these theoretical advances, constructing Lyapunov densities remains a challenging task in practice. In the autonomous setting, however, structured computational approaches—particularly those based on polynomial~\cite{prajna2004nonlinear}, trigonometric~\cite{tripathi2026certification}, and (real-trigonometric) hybrid~\cite{tripathi_hybrid} representations—have enabled systematic and tractable constructions in several nontrivial classes of systems, even though a general solution remains out of reach. In these results, the density is constructed via sum-of-squares (SOS) techniques, yielding tractable semidefinite programs (SDPs). However, extending such computational frameworks to nonautonomous systems introduces additional challenges due to time dependence. The sufficient condition in~\cite{masubuchi2023extended} requires integrability of an expression, which is in terms of the vector field and the density, over the time variable. As noted in~\cite{masubuchi2023extended}, ``this integrability in time over $\mathbb{R}_+$ would make it difficult to find a Lyapunov density for nonlinear time-varying systems that is not periodic".

In this paper, we obtain SDPs to construct a time-varying Lyapunov density for nonautonomous systems on the hypertorus that are polynomial, periodic, or mixed (polynomial-periodic) in time. We obtain these SDPs in terms of Gram matrix representations~\cite{dumitrescu2007positive,dumitrescu2010positive}, which have been earlier used to obtain densities for autonomous systems on the hypertorus~\cite{tripathi2026certification} and hypercylinders~\cite{tripathi_hybrid}. To utilize Gram representations, the nonautonomous systems in this paper are assumed to be mixed (polynomial-trigonometric) in the time variable $t$. The integrability issue mentioned in~\cite{masubuchi2023extended} is addressed for this class of time-varying systems by imposing that the highest polynomial power of $t$ occurring in the density must be larger than the highest power of $t$ occurring in any component of the vector field. This is enforced in the SDP by constraining the size of the Gram matrix corresponding to the Lyapunov density, which is treated as a decision variable.

The properties of Gram representations are directly tied to the function's zeros and can be used to confine the zero set of its corresponding function. This, in turn, can be used to confine the blow-up of the obtained density to a prescribed set, namely to the equilibrium point, as required in~\cite{masubuchi2023extended}. To aid this confinement of the zero set via SDP, a novel block decomposition of Gram representations is proposed, and an iterated block condition is obtained (Proposition~\ref{prop:PD_condition_gram}). In particular, we prove that a function that is polynomial (trigonometric, respectively) in $t$ vanishes at the origin if the sum of all blocks on any given antidiagonal (diagonal, respectively) is a zero-sum matrix, that is, the sum of all entries of this matrix is zero (Proposition~\ref{prop:PD_condition_gram}, Remark~\ref{rem:zero_sum_hybrid} and Figure~\ref{fig:block_condition_visual}). Imposing an additional constraint on the Gram representation ensures that its corresponding function vanishes \textit{exactly at the origin}. As a consequence, we obtain SDPs (Theorems~\ref{thm:ags} and~\ref{thm:ags_trig}) to construct time-varying density satisfying all sufficient conditions in~\cite{masubuchi2023extended}. These SDPs can be solved using our program, \texttt{NautLDT} (Nonautonomous Lyapunov Density on Torus), a MATLAB-based solver developed using CVX~\cite{cvx} and publicly available on GitHub~\cite{Tripathi_Software_Nonautonomous_Lyapunov_2026}. 

The remainder of the paper is organized as follows. In Section~\ref{sec:notations}, we introduce the notations that will be used throughout the paper. In Section~\ref{sec:background}, we present the sufficient conditions for almost-global stability of time-varying systems obtained in~\cite{masubuchi2023extended}. In Section~\ref{sec:SDP_formul_hybrid}, we present some results on Gram representations in Section~\ref{sec:known_results_hybrid}, define the block decomposition of a Gram representation and derive a sufficient condition for positive definiteness of its corresponding function in Section~\ref{sec:block_decomp_hybrid}; and subsequently obtain the SDP formulation of~\cite{masubuchi2023extended} for systems which are polynomial, trigonometric or mixed (polynomial-trigonometric) in $t$ in Section~\ref{sec:SDP_formulation_thm_hybrid}. In Section~\ref{sec:examples}, we apply the theory to obtain a Lyapunov density for a system that is polynomial in $t$ in Example~\ref{examp:1}, and to determine almost-synchronization in a Kuramoto model with interconnection-dependent time-periodic coupling strengths in Example~\ref{examp:Kuramoto}. We conclude the paper by applying the theory to robust stabilization of vector fields that are affine in the uncertain parameter (Example~\ref{examp:2}) in Section~\ref{sec:robust}.

\subsection{Notations}\label{sec:notations}

The following notations are used throughout the paper. The $n-$dimensional hypertorus is denoted by $\mathbb{T}^n=[0,2\pi)^n$. The set of continuously differentiable functions from space $X$ to $Y$ is denoted by $\mathcal{C}^1(X,Y)$. For any function $v(t,\bm{\theta})\in\mathcal{C}^1(\mathbb{R}\times\mathbb{T}^n,\mathbb{R})$, its partial derivatives are denoted by $v_t(t,\bm{\theta})$, $v_{\theta_1}(t,\bm{\theta}),\ldots,v_{\theta_n}(t,\bm{\theta})$, and its gradient in space variables is expressed as $\operatorname{grad}_{\bm{\theta}}\, v(t,{\bm{\theta}})\vcentcolon=\left[\frac{\partial v}{\partial \theta_1}(t,\bm{\theta}),\ldots,\frac{\partial v}{\partial \theta_n}(t,\bm{\theta})\right]$. For a nonautonomous vector field $\dot{\bm{\theta}}=\bm{f}(t,{\bm{\theta}})$ on $\mathbb{T}^n$, the $l^{th}$ component of the vector field is denoted by $f^{(l)}(t,\bm\theta)$, and the divergence of the vector field in space variables is expressed as $\operatorname{div}_{\bm{\theta}}\, \bm{f}(t,{\bm{\theta}})\vcentcolon=\sum_{l=1}^n f^{(l)}_{\theta_l}(t,\bm{\theta})$. The solution curve of the nonautonomous system with initial condition $\bm{\theta_0}$ at time $t_0$ is denoted by $\bm\Phi(t;t_0,\bm{\theta_0})$. The vectors of zeroes and ones are denoted by $\bm{0}$ and $\bm{1}$, respectively, and their dimensions are understood from the context. The Euclidean norm on $\mathbb{T}^n$ is denoted by $\|\cdot\|$ and ball of radius $a$ in $\mathbb{T}^n$ centered at the origin is denoted by $\mathbb{B}^n(a;\bm{0})$. Submatrices of a matrix are defined using an index-set notation, meaning, for $A\in \mathbb{R}^{m\times n}$, $I\subseteq\left\{1, \ldots, m\right\}$, and $J\subseteq\left\{1, \ldots, n\right\}$, the submatrix formed by rows in $I$ and columns in $J$ is denoted by $A_{I,J}$. The conjugate transpose of $A$ is denoted by $A^\dagger$. The Kronecker product of square matrices $A$ and $B$ is denoted by $A\otimes B$, and their Kronecker sum is denoted by $A\oplus B=A\otimes I_{\textup{size}(B)} + I_{\textup{size}(A)}\otimes B$. For vectors $\bm{k},\bm{j}\in\mathbb{Z}^n$, the inequality $\bm{k}\ge \bm{j}$ is understood elementwise.

\section{Background and problem statement}\label{sec:background}

Consider the nonautonomous system 
\begin{equation}\label{eq:system_timevar}
    \dot{{\bm{\theta}}}=\bm{f}(t,{\bm{\theta}})
\end{equation}
where the origin ${\bm{\theta}}=\bm{0}$ is an \textit{equilibrium} of the system, meaning $\bm{f}(t,{\bm{0}})={\bm{0}}$ for all $t\in\mathbb{R}$; and $\bm{f}$ is \textit{locally Lipschitz continuous} in $\bm{\theta}$ at $(t,\bm{0})$ for all $t\in\mathbb{R}$, that is, for each $t\in\mathbb{R}$, there exists $a>0$ and $L>0$ such that $\left\| \bm{f}(\tau,\bm{\theta^{(1)}})-\bm{f}(\tau,\bm{\theta^{(2)}})\right\|<L\left\|\bm{\theta^{(1)}}-\bm{\theta^{(2)}}\right\|$ holds for all $\tau\in(t-a,t+a)$ and for all $\bm{\theta^{(1)}},\bm{\theta^{(2)}}\in \mathbb{B}^n(a;\bm{0})$. The solutions of nonautonomous systems can be expressed using a continuous mapping $(t,t_0,\bm{\theta_0})\mapsto \bm\Phi(t;t_0,\bm{\theta_0})$ for all $(t_0,\bm{\theta_0})\in \mathbb{R}\times \mathbb{T}^n$ and $t$ in some maximal interval $(t_-,t_+)$. The following theorem gives a sufficient condition to check that all solutions are forward-complete, meaning $t_+=\infty$, and almost all of them converge to the origin.

\begin{lemma}[Adapted from~\cite{masubuchi2023extended}]\label{lem:ags_general}
    Assume that $\bm{f}\in\mathcal{C}^1(\mathbb{R}\times \mathbb{T}^n,\mathbb{T}^n)$ in~\eqref{eq:system_timevar} satisfies $\bm{f}(t,{\bm{0}})={\bm{0}}$ for all $t\in\mathbb{R}$, and is locally Lipschitz continuous in $\bm{\theta}$ at $(t,\bm{0})$ for all $t\in\mathbb{R}$. Suppose there exists a nonnegative function $\rho\in\mathcal{C}^1(\mathbb{R}\times\left(\mathbb{T}^n\setminus \{\bm{0}\}\right),\mathbb{R})$ such that \begin{equation}\label{eq:integral_hybrid}
        \widehat{I}_r=\int_0^\infty \int_{\left\|\bm{\theta}\right\|\ge r}\frac{1+\left\|\bm{f}(t,\bm{\theta})\right\|}{1+\|\bm{\theta}\|}\rho(t,\bm{\theta})\ \mathrm{d}\bm{\theta}\, \mathrm{d}t
    \end{equation}
    is finite for all $r>0$, and  \begin{equation}\label{eq:monzon}
        \frac{\partial \rho}{\partial t}(t,{\bm{\theta}})+\operatorname{div}_{\bm{\theta}} (\rho\bm{f})(t,{\bm{\theta}})>0
    \end{equation}
    for almost all $(t,{\bm{\theta}})\in \mathbb{R}_+\times \mathbb{T}^n$. Then, for almost all $(t_0,\bm{\theta_0})\in\mathbb{R}_+\times \mathbb{T}^n$, solution $\bm\Phi(t;t_0,\bm{\theta_0})$ is defined for all $t\ge t_0$ and $\lim_{t\to\infty}\bm\Phi(t;t_0,\bm{\theta_0})=\bm{0}$.
\end{lemma}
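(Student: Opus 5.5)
The plan is to follow the Liouville / continuity-equation argument of Rantzer, in the nonautonomous version of Monzón and Masubuchi. Everything is carried out on the extended phase space $\mathbb{R}\times\mathbb{T}^n$ with the flow $(t_0,\bm{\theta_0})\mapsto(t,\bm\Phi(t;t_0,\bm{\theta_0}))$.

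\textbf{Step 1: measure-growth identity.} For a measurable set $Z\subseteq\mathbb{R}_+\times(\mathbb{T}^n\setminus\{\bm 0\})$ on which the flow is defined, set $Z_s=\{(t_0+s,\bm\Phi(t_0+s;t_0,\bm{\theta_0})) : (t_0,\bm{\theta_0})\in Z\}$. The time-shifted flow on the extended space has vector field $(1,\bm f)$, whose divergence is $\partial_t$ plus $\operatorname{div}_{\bm\theta}$. Liouville's formula then gives
\[
\frac{\mathrm d}{\mathrm ds}\int_{Z_s}\rho
=\int_{Z_s}\Big(\frac{\partial\rho}{\partial t}+\operatorname{div}_{\bm\theta}(\rho\bm f)\Big),
\]
and this is positive whenever $Z$ has positive measure, by~\eqref{eq:monzon}. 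This step needs $\bm f\in\mathcal C^1$ (so the flow is a $\mathcal C^1$ diffeomorphism away from the origin) and $\rho\in\mathcal C^1$ away from $\bm 0$. Since $\bm f(t,\bm 0)=\bm 0$ and $\bm f$ is locally Lipschitz at the origin, uniqueness holds there, so trajectories starting off the origin never reach it in finite time. Hence $Z_s$ stays inside the set where $\rho$ is $\mathcal C^1$.

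\textbf{Step 2: forward completeness for almost all initial data.} Let $B$ be the set of $(t_0,\bm{\theta_0})$ whose forward solution escapes in finite time. Because $\mathbb{T}^n$ is compact, escape can only come from non-uniqueness or blow-up near the singular point. If I read the torus setting literally, solutions are global and this step is easy. Otherwise I would follow Masubuchi: integrate the inequality over $B$ and use $\widehat I_r<\infty$ to show $B$ is null.

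\textbf{Step 3: convergence.} Fix $r>0$ and let
\[
Z=\{(t_0,\bm{\theta_0}) : \|\bm\Phi(t;t_0,\bm{\theta_0})\|\ge r \text{ for a sequence of times } t\to\infty\}.
\]
Suppose $Z$ has positive measure. By inner regularity and a shrinking argument, I would pick a subset $Z'\subseteq Z$ of positive measure and a strictly increasing sequence $s_k\to\infty$ such that:
\begin{enumerate}[label=(\roman*)]
\item the images $Z'_{s_k}$ are pairwise disjoint in $\mathbb{R}_+\times\mathbb{T}^n$, which is possible since they live in disjoint time slabs once the $s_k$ are spaced out and $Z'$ is bounded in $t$;
\item each $Z'_{s_k}$ lies in $\{\|\bm\theta\|\ge r/2\}$.
\end{enumerate}
Step 1 gives $\int_{Z'_{s_k}}\rho\ge\int_{Z'}\rho=:c>0$, where positivity of $\rho$ a.e.\ follows from~\eqref{eq:monzon} after a standard adjustment. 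On $\|\bm\theta\|\ge r/2$ the weight $(1+\|\bm f\|)/(1+\|\bm\theta\|)$ is bounded below by $1/(1+\mathrm{diam}\,\mathbb{T}^n)$. Therefore
\[
\widehat I_{r/2}\ge\sum_k\int_{Z'_{s_k}}\rho\,/(1+\mathrm{diam})=\infty,
\]
which is a contradiction. Taking the union over $r=1/m$ yields convergence for almost every initial condition.

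\textbf{Main obstacle.} The hardest step is item (ii) of Step 3. Trajectories in $Z$ only visit $\|\bm\theta\|\ge r$ along some sequence of times, not uniformly, so I cannot directly make each translate $Z'_{s_k}$ lie entirely in the annulus. I would first handle this with Masubuchi's averaging trick: integrate the measure-growth identity in $s$ over intervals, so that the total time a positive-measure set spends in $\|\bm\theta\|\ge r$ is controlled by $\widehat I_r$. Here the factor $1+\|\bm f\|$ should bound the speed of re-entry, giving a definite time spent in $\|\bm\theta\|\ge r/2$ per excursion. If that fails, I would fall back on citing~\cite{masubuchi2023extended} directly. The lemma is stated as adapted, so the remaining task is only to check that its Euclidean hypotheses transfer to the compact torus.
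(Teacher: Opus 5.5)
The paper gives no independent proof of this lemma. It quotes Masubuchi's result on $\mathbb{R}^n$ and asserts that the \emph{proof} carries over verbatim to $\mathbb{T}^n$, because the torus has a natural volume form with respect to which $\operatorname{div}_{\bm\theta}$, and hence~\eqref{eq:monzon}, is intrinsic. Your fallback is therefore the paper's route, with one correction: what transfers is the argument, not the statement. You cannot check hypotheses and quote the $\mathbb{R}^n$ theorem directly. Lifting to $\mathbb{R}^n$ turns $\bm 0$ into the lattice $2\pi\mathbb{Z}^n$ of singularities of $\rho$, and it makes $\rho$ periodic, so~\eqref{eq:integral_hybrid} diverges. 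Lifted trajectories may also converge to other lattice points.

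As a self-contained proof, your Step 3 has a genuine gap: item (ii) cannot be arranged. Membership in $Z$ only says that each trajectory visits $\{\|\bm\theta\|\ge r\}$ at its own times. These visits can be arbitrarily brief, because $\sup_{\bm\theta}\|\bm f(t,\bm\theta)\|$ may grow in $t$ (polynomially, in the setting of Theorem~\ref{thm:ags}). So no shrinking argument yields a positive-measure $Z'$ with a common sequence $s_k$ and $Z'_{s_k}\subseteq\{\|\bm\theta\|\ge r/2\}$. Tellingly, your contradiction uses only the constant lower bound on the weight and never the $\|\bm f\|$ in $1+\|\bm f\|$. That factor is exactly what the hypothesis supplies for this difficulty: the time per excursion is not bounded below when $\|\bm f\|$ is unbounded, but the $\|\bm f\|$-weighted time is.

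The fix is to argue trajectory-wise. Fix $t_0\ge 0$, and put $J(t,\bm a)=\det D_{\bm a}\bm\Phi(t;t_0,\bm a)$ and $g(t,\bm a)=\rho(t,\bm\Phi(t;t_0,\bm a))\,J(t,\bm a)$. Then $\partial_t g=\bigl(\rho_t+\operatorname{div}_{\bm\theta}(\rho\bm f)\bigr)J$. Since $(t,\bm a)\mapsto(t,\bm\Phi(t;t_0,\bm a))$ is a $\mathcal{C}^1$ diffeomorphism, Fubini shows that $g(\cdot,\bm a)$ is strictly increasing for a.e.\ $\bm a$, so $g(t_0+1,\bm a)>0$ a.e. Changing variables in $\widehat I_{r/4}$ and using $1+\|\bm\theta\|\le 1+D$ with $D=\max_{\mathbb{T}^n}\|\bm\theta\|$ gives
\[
\int_{\mathbb{T}^n} g(t_0+1,\bm a)\int_{t_0+1}^\infty \mathbf{1}_{\{\|\bm\Phi(t;t_0,\bm a)\|\ge r/4\}}\bigl(1+\|\bm f(t,\bm\Phi(t;t_0,\bm a))\|\bigr)\,\mathrm{d}t\,\mathrm{d}\bm a\le (1+D)\,\widehat I_{r/4}<\infty .
\]
If $\limsup_{t\to\infty}\|\bm\Phi(t;t_0,\bm a)\|>r/2$, the inner integral is infinite, in one of two ways. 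Either $\|\bm\Phi\|\ge r/4$ eventually, and the $1$ term gives infinite time. Or there are infinitely many disjoint up-crossings from $r/4$ to $r/2$, each costing $\int\|\bm f\|\,\mathrm{d}t\ge\int\|\dot{\bm\Phi}\|\,\mathrm{d}t\ge r/4$; this is the $\|\bm f\|$ term, and it needs $\|\cdot\|$ to be the flat distance to $\bm 0$, which is $1$-Lipschitz. Hence these $\bm a$ form a null set, and taking $r=1/m$ finishes the proof.

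Step 2 is unnecessary, since $\bm f\in\mathcal{C}^1$ on compact $\mathbb{T}^n$ gives global solutions. This argument also yields the fiberwise form (every $t_0$, a.e.\ $\bm\theta_0$) that is actually used in Theorems~\ref{thm:ags} and~\ref{thm:ags_trig}.
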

The following theorem gives a sufficient condition to check that all solutions are forward-complete and almost all of them converge to the origin for periodic systems.
\begin{lemma}[Adapted from~\cite{masubuchi2023extended}]\label{lem:ags_general_trigo}
    Given $T>0$, if $\bm{f}\in\mathcal{C}^1(\mathbb{R}\times \mathbb{T}^n,\mathbb{T}^n)$ in~\eqref{eq:system_timevar} satisfies $\bm{f}(t,{\bm{0}})={\bm{0}}$ and $\bm{f}(t+T,\bm\theta)=\bm{f}(t,\bm{\theta})$ for all $t\in\mathbb{R}$, and is locally Lipschitz continuous in $\bm{\theta}$ at $(t,\bm{0})$ for all $t\in\mathbb{R}$. Suppose there exists a nonnegative function $\rho\in\mathcal{C}^1(\mathbb{R}\times\left(\mathbb{T}^n\setminus \{\bm{0}\}\right),\mathbb{R})$ satisfying $\rho(t+T,\bm\theta)=\rho(t,\bm{\theta})$ for all $t\in\mathbb{R}$ such that \begin{equation}\label{eq:integral_trig}
        {\widehat{I}_r}^{rot}=\int_{\left\|\bm{\theta}\right\|\ge r}\frac{1+\left\|\bm{f}(t,\bm{\theta})\right\|}{1+\|\bm{\theta}\|}\  \rho(t,\bm{\theta})\  \mathrm{d}\bm{\theta}
    \end{equation} 
    is finite for all $r,t>0$, and~\eqref{eq:monzon} is satisfied for almost all $(t,{\bm{\theta}})\in [0,T]\times \mathbb{T}^n$. Then, for almost all $(t_0,\bm{\theta_0})\in\mathbb{R}_+\times \mathbb{T}^n$, solution $\bm\Phi(t;t_0,\bm{\theta_0})$ is defined for all $t\ge t_0$ and $\lim_{t\to\infty}\bm\Phi(t;t_0,\bm{\theta_0})=\bm{0}$. 
\end{lemma}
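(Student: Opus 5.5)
The plan is to reduce Lemma~\ref{lem:ags_general_trigo} to Lemma~\ref{lem:ags_general}. We build from the periodic density $\rho$ a new density $\tilde\rho$ that satisfies the hypotheses of Lemma~\ref{lem:ags_general}; in particular, the time integral in~\eqref{eq:integral_hybrid} must become finite. The natural choice is an exponential time weight
\begin{equation*}
\tilde\rho(t,\bm\theta)=e^{-\varepsilon t}\rho(t,\bm\theta)
\end{equation*}
with $\varepsilon>0$ small. Nonnegativity and the $\mathcal{C}^1$ regularity on $\mathbb{R}\times(\mathbb{T}^n\setminus\{\bm 0\})$ are inherited from $\rho$.

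\textbf{Step 1 (divergence inequality).} A direct computation gives
\begin{equation*}
\tilde\rho_t+\operatorname{div}_{\bm\theta}(\tilde\rho\bm f)=e^{-\varepsilon t}\left(\rho_t+\operatorname{div}_{\bm\theta}(\rho\bm f)-\varepsilon\rho\right).
\end{equation*}
We therefore need $\rho_t+\operatorname{div}_{\bm\theta}(\rho\bm f)>\varepsilon\rho$ almost everywhere. This is the main obstacle. Inequality~\eqref{eq:monzon} is only strict pointwise, and $\rho$ may blow up near $\bm 0$, so no uniform $\varepsilon$ comes for free.

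I would first try the following fix. Choose a time-dependent weight $\tilde\rho=w(t)\rho$, where $w$ is positive and decreasing, for instance $w(t)=1/(1+t)^2$. The extra term is then $w'\rho$. Since $w'/w=-2/(1+t)$, we need
\begin{equation*}
\rho_t+\operatorname{div}_{\bm\theta}(\rho\bm f)>\frac{2}{1+t}\,\rho,
\end{equation*}
which holds for large $t$ only if the ratio $(\rho_t+\operatorname{div}_{\bm\theta}(\rho\bm f))/\rho$ is bounded below by a positive constant. Failing that, I would use the freedom noted in~\cite{masubuchi2023extended}: the original argument only uses the inequality through the Liouville-type identity
\begin{equation*}
\frac{d}{dt}\int_{\bm\Phi(t;t_0,Z)}\rho=\int_{\bm\Phi(t;t_0,Z)}\bigl(\rho_t+\operatorname{div}(\rho\bm f)\bigr)
\end{equation*}
for measurable sets $Z$. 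So the cleaner route may be to reprove the conclusion directly rather than via Lemma~\ref{lem:ags_general}.

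\textbf{Step 2 (direct argument, fallback).} Let $Z$ be the set of initial conditions at $t_0$ whose trajectories do not converge to $\bm 0$, or that stay outside $\mathbb{B}^n(r;\bm 0)$ at infinitely many times $t_0+kT$. By periodicity, consider the Poincaré (period) map $P$, which is $\bm\Phi(t_0+T;t_0,\cdot)$. The measure $\mu(A)=\int_A\rho(t_0,\cdot)$ satisfies $\mu(P(A))>\mu(A)$ for every $A$ of positive measure away from $\bm 0$. This follows from~\eqref{eq:monzon} integrated over one period together with the periodicity $\rho(t+T)=\rho(t)$. It is exactly Rantzer's discrete-time argument. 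The finiteness of ${\widehat I}_r^{rot}$ ensures that $\mu$ is finite on $\{\|\bm\theta\|\ge r\}$.

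\textbf{Step 3 (conclusion).} The set $Z_r$ of points returning infinitely often to $\{\|\bm\theta\|\ge r\}$ is, up to null sets, forward invariant under $P$. Hence $\mu(P(Z_r))\le\mu(Z_r)<\infty$, contradicting strict increase unless $Z_r$ is null. Here we use compactness of $\mathbb{T}^n$, which also gives forward completeness, since there is no escape to infinity. Taking $r=1/m$ and the union over $m$, almost every orbit of $P$ converges to $\bm 0$. Local Lipschitz continuity at the origin then controls the intermediate times in $[t_0+kT,t_0+(k+1)T]$, yielding convergence in continuous time. A Fubini argument over $t_0\in[0,T]$ extends the statement to almost all $(t_0,\bm\theta_0)$.
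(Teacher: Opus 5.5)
Your Step~1 is, as you suspect, a dead end: nothing in the hypotheses bounds $(\rho_t+\operatorname{div}_{\bm\theta}(\rho\bm f))/\rho$ below by a positive constant. So everything rests on Steps~2--3, which form a self-contained Poincar\'e-map version of Rantzer's argument. The paper itself gives no argument; it only asserts that Masubuchi's proof on $\mathbb{R}^n$ carries over verbatim to $\mathbb{T}^n$. Step~2 is sound. Write $K_r:=\{\bm\theta\in\mathbb{T}^n:\|\bm\theta\|\ge r\}$. For $A\subseteq K_r$, the images $\bm\Phi(t;t_0,A)$, $t\in[t_0,t_0+T]$, stay in some $K_{r'}$ with $r'>0$, because the flow is a diffeomorphism fixing $\bm 0$. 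Liouville's formula and $\rho(t_0+T,\cdot)=\rho(t_0,\cdot)$ then give $\mu(P(A))>\mu(A)$ whenever $A$ is non-null.

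The gap is in Step~3. The set $Z_r$ is not bounded away from $\bm 0$: points arbitrarily close to $\bm 0$ may be carried out to $K_r$ infinitely often, and since no local stability is assumed, nothing excludes this a priori. So Step~2 does not apply to $Z_r$, and the assertion $\mu(Z_r)<\infty$ is unjustified. This is not a harmless omission. Apply Step~2 to $A\setminus\mathbb{B}^n(\delta;\bm 0)$ and let $\delta\to 0$; monotone convergence gives $\mu(P(A))\ge\mu(A)$ for every measurable $A$. Applying this to $\mathbb{T}^n=P(\mathbb{T}^n)$, split into $K_r$ and its complement, shows that $\mu$ of every neighbourhood of $\bm 0$ is infinite. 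So your identity $\mu(P(Z_r))=\mu(Z_r)$ may just read $\infty=\infty$, and no contradiction follows.

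The repair is to localize. Put $E_r:=Z_r\cap K_r$. Then $\mu(E_r)\le\mu(K_r)<\infty$ and $Z_r=\bigcup_{k\ge 0}P^{-k}(E_r)$, so it suffices to show that $E_r$ is null. Define $\tau(\bm\theta):=\min\{k\ge 1:P^k(\bm\theta)\in K_r\}$. The first-return map $R(\bm\theta):=P^{\tau(\bm\theta)}(\bm\theta)$ is injective with $R(E_r)\subseteq E_r$. Set $E_{r,k}:=\{\bm\theta\in E_r:\tau(\bm\theta)=k\}$. Monotonicity and Step~2 give $\mu(P^k(E_{r,k}))\ge\mu(P(E_{r,k}))>\mu(E_{r,k})$ for each non-null $E_{r,k}$. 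Hence, if $E_r$ were non-null, $\mu(E_r)\ge\mu(R(E_r))=\sum_k\mu(P^k(E_{r,k}))>\sum_k\mu(E_{r,k})=\mu(E_r)$, a contradiction. The rest of your Step~3 then goes through: take $r=1/m$ and the union over $m$, and use continuity of the flow on $[t_0,t_0+T]\times\mathbb{T}^n$ for intermediate times. It even yields the conclusion for every $t_0$, not just almost every one.
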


Lemmas~\ref{lem:ags_general} and~\ref{lem:ags_general_trigo} were originally proved for nonautonomous systems on $\mathbb{R}^n$, but they extend to smooth manifolds such as the hypertorus $\mathbb{T}^n$ and hypercylinder $\mathbb{R}^m\times\mathbb{T}^{n-m}$. Indeed, these manifolds admit a natural smooth volume form (e.g., induced by a Riemannian metric), with respect to which the divergence operator is defined intrinsically via the Lie derivative. Consequently, the continuity equation~\eqref{eq:monzon} is well-posed in a coordinate-free manner, and the proof carries over verbatim provided the integrability assumption~\eqref{eq:integral_hybrid}, or~\eqref{eq:integral_trig}, respectively, is satisfied. 

The nonautonomous system~\eqref{eq:system_timevar} evolves on a manifold with both Euclidean and periodic components when time dependence is not periodic, hence motivating the use of hybrid (real–trigonometric) polynomials~\cite{dumitrescu2010positive}. In particular, the Euclidean component captures the time dependence, while the trigonometric part encodes the periodic geometry of $\mathbb{T}^{n}$. Such representations therefore provide a convenient framework for constructing density functions and for verifying the continuity equation~\eqref{eq:monzon}.

\begin{definition}[Functions mixed polynomial-trigonometric in $t$ and trigonometric in $\bm\theta$]\label{def:hybrid}
A function $v(t,\bm{\theta})$ is said to be mixed (polynomial-trigonometric) in $t$ and trigonometric in $\bm\theta$ if it is defined on $\mathbb{R}\times\mathbb{T}^n$ as \begin{equation}\label{eq:polynomial_mix}
		\begin{aligned}
		    v(t,\bm{\theta})&=\sum_{m_1=0}^ {deg_{v_{t}}^{pol}}\sum_{m_2=-deg_{v_{t}}^{rot}}^ {deg_{v_{t}}^{rot}}\,\sum_{\bm{k}=-\bm{deg_{v_\theta}}}^{\bm{deg_{v_\theta}}}v_{m_1,m_2,\bm{k}}\,t^{m_1}\,{\rm{e}}^{-i\,(m_2,\bm{k})\cdot(t,\bm{\theta})}\\
            &=\sum_{m_1=0}^ {deg_{v_{t}}^{pol}}\sum_{m_2=-deg_{v_{t}}^{rot}}^ {deg_{v_{t}}^{rot}}\,\sum_{k_1=-{deg_{v_\theta}(1)}}^{{deg_{v_\theta}(1)}}\ldots\sum_{k_n=-{deg_{v_\theta}(n)}}^{{deg_{v_\theta}(n)}} v_{m_1,m_2,k_1,\ldots, k_n}\,t^{m_1}\,{\rm{e}}^{-i\,(m_2 t+k_1\theta_1+\ldots k_n\theta_n)},
		\end{aligned}
\end{equation}
where the coefficients satisfy $v_{m_1,-m_2,-\bm{k}}=\overline{v_{m_1,m_2,\bm{k}}}$ so as to make the function real-valued. Here, the vector $\bm{deg_{v}}\vcentcolon=(deg_{v_{t}}^{pol},deg_{v_{t}}^{rot},\bm{deg_{v_{\theta}}})\ge\bm{0}$ is taken as minimal and is called the \textit{degree} of $v$. If $deg_{v_{t}}^{rot}=0$, the function $v(t,\bm{\theta})$ is \textit{polynomial in $t$ and trigonometric in $\bm\theta$}, and takes the form \begin{equation*}
		\begin{aligned}
		    v(t,\bm{\theta})&=\sum_{m_1=0}^ {deg_{v_{t}}^{pol}}\,\sum_{\bm{k}=-\bm{deg_{v_\theta}}}^{\bm{deg_{v_\theta}}}v_{m_1,0,\bm{k}}\,t^{m_1}\,{\rm{e}}^{-i\,\bm{k}\cdot\bm{\theta}}.
		\end{aligned}
\end{equation*}
Similarly, if $deg_{v_{t}}^{pol}=0$, the function $v(t,\bm{\theta})$ is \textit{trigonometric in $t$ and $\bm\theta$}, and takes the form \begin{equation*}
		\begin{aligned}
		    v(t,\bm{\theta})&=\sum_{m_2=-deg_{v_{t}}^{rot}}^ {deg_{v_{t}}^{rot}}\,\sum_{\bm{k}=-\bm{deg_{v_\theta}}}^{\bm{deg_{v_\theta}}}v_{0,m_2,\bm{k}}\,{\rm{e}}^{-i\,(m_2,\bm{k})\cdot(t,\bm{\theta})}.
		\end{aligned}
\end{equation*}

We will say that the system $\dot{\bm\theta}=\bm{f}(t,\bm\theta)$ or the vector field $\bm{f}$ is polynomial, trigonometric, or mixed in $t$ if all its components $f^{(l)}$ share \textit{the same behavior} (are polynomial, trigonometric, or mixed) in $t$.
\end{definition}

\begin{assumption}
    The nonautonomous system~\eqref{eq:system_timevar} is mixed (polynomial-trigonometric) in $t$.
\end{assumption}

\begin{definition}[Positive definite function]\label{def:positivedefinitehybrid}
    A function $v(t,\bm{\theta})$ is positive semidefinite if $v(t,\bm{\theta})\ge 0$. We say that $v(t,\bm{\theta})$ is positive definite if it is positive semidefinite and $v(t,\bm{\theta})= 0$ if and only if $\bm{\theta}=\bm{0}$.
\end{definition}

These definitions allow us to obtain the following sufficient condition for almost global stability.

\begin{lemma}\label{lem:ags_polynomial}
	Consider a nonautonomous system~\eqref{eq:system_timevar} which is polynomial, trigonometric, or mixed in $t$; and trigonometric in $\bm\theta$. If there exists a nonzero positive definite function $v(t,\bm{\theta})$ and a nonzero positive semidefinite function $w(t,\bm{\theta})$, both sharing the same behavior as $\bm{f}$ in $t$ and are trigonometric in $\bm\theta$; such that either
		\begin{enumerate}[label=(IC\arabic*)]
            \item\label{item:Rintegral} the integral
            \begin{equation}\label{eq:integral_hybridpoly}
        \widehat{I}_r=\int_0^\infty \int_{\left\|\bm{\theta}\right\|\ge r}\frac{1+\left\|\bm{f}(t,\bm{\theta})\right\|}{(1+\|\bm{\theta}\|)\, v(t,\bm{\theta})}\ \mathrm{d}\bm{\theta}\, \mathrm{d}t
    \end{equation}
		\end{enumerate}
        \noindent is finite for all $r>0$, when $\bm{f}$ is polynomial or mixed in $t$, or
    \begin{enumerate}[label=(IC\arabic*')]
        \item\label{item:Tintegral} the integral
            \begin{equation}\label{eq:integral_trigpoly}
        {\widehat{I}_r}^{rot}=\int_{\left\|\bm{\theta}\right\|\ge r}\frac{1+\left\|\bm{f}(t,\bm{\theta})\right\|}{\left(1+\|\bm{\theta}\|\right) v(t,\bm{\theta})}\  \  \mathrm{d}\bm{\theta}
    \end{equation}
    \end{enumerate}
    \noindent is finite for all $r,t>0$, when $\bm{f}$ is trigonometric in $t$; and
    \begin{equation}\label{eq:LieRelationDual}
			w(t,\bm{\theta})=-v_t(t,\bm{\theta})-\operatorname{grad}_{\bm{\theta}} v(t,\bm{\theta})\cdot \bm{f}(t,\bm{\theta}) + v(t,\bm{\theta})\, \operatorname{div}_{\bm{\theta}} \bm{f}(t,\bm{\theta}),
	\end{equation}
    then almost all solutions of~\eqref{eq:system_timevar} satisfy $\lim_{t\to\infty}\bm\Phi(t;t_0,\bm{\theta_0})=\bm{0}$.
\end{lemma}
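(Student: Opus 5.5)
The plan is to set $\rho(t,\bm\theta)\vcentcolon=1/v(t,\bm\theta)$ and verify the hypotheses of Lemma~\ref{lem:ags_general} (when $\bm f$ is polynomial or mixed in $t$) or Lemma~\ref{lem:ags_general_trigo} (when $\bm f$ is trigonometric in $t$).

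\emph{Regularity, sign and periodicity.} Since $v$ is positive definite in the sense of Definition~\ref{def:positivedefinitehybrid}, we have $v(t,\bm\theta)>0$ for all $t$ and all $\bm\theta\neq\bm 0$. The function $v$ is a finite sum of terms $t^{m_1}\mathrm{e}^{-i(m_2,\bm k)\cdot(t,\bm\theta)}$, hence smooth. Therefore $\rho=1/v$ is nonnegative and belongs to $\mathcal C^1(\mathbb R\times(\mathbb T^n\setminus\{\bm 0\}),\mathbb R)$. In the trigonometric case, every term of $v$ is $2\pi$-periodic in $t$, and so is $\bm f$. Hence $\rho(t+T,\bm\theta)=\rho(t,\bm\theta)$ with the common period $T=2\pi$, which is exactly the periodicity required in Lemma~\ref{lem:ags_general_trigo}.

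\emph{Integrability.} Substituting $\rho=1/v$ into \eqref{eq:integral_hybrid} gives literally \eqref{eq:integral_hybridpoly}, and substituting it into \eqref{eq:integral_trig} gives \eqref{eq:integral_trigpoly}. So \ref{item:Rintegral}, respectively \ref{item:Tintegral}, is exactly the integrability hypothesis of the corresponding lemma.

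\emph{Continuity inequality.} On $\bm\theta\neq\bm 0$ we compute
\[
\rho_t+\operatorname{div}_{\bm\theta}(\rho\bm f)=-\frac{v_t}{v^2}-\frac{\operatorname{grad}_{\bm\theta}v\cdot\bm f}{v^2}+\frac{\operatorname{div}_{\bm\theta}\bm f}{v}=\frac{w(t,\bm\theta)}{v(t,\bm\theta)^2},
\]
using \eqref{eq:LieRelationDual}. This expression is nonnegative, so \eqref{eq:monzon} holds wherever $w>0$. It therefore remains to show that the zero set of $w$ has Lebesgue measure zero in $\mathbb R_+\times\mathbb T^n$ (or $[0,T]\times\mathbb T^n$).

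This last step is the main obstacle. I plan to use the fact that $w$ is real-analytic on the connected manifold $\mathbb R\times\mathbb T^n$: it is a finite combination of monomials in $t$ times exponentials, so it is an entire function of $(t,\bm\theta)$ after lifting to $\mathbb R^{n+1}$. Since $w$ is nonzero, it is not identically zero. The zero set of a nontrivial real-analytic function has measure zero, by the standard induction on dimension via Fubini, because a one-variable analytic function that is not identically zero has isolated zeros. Hence \eqref{eq:monzon} holds almost everywhere.

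The set $\{\bm\theta=\bm 0\}$, where $\rho$ is undefined, is also null. Lemma~\ref{lem:ags_general} or Lemma~\ref{lem:ags_general_trigo} then yields forward completeness and $\bm\Phi(t;t_0,\bm{\theta_0})\to\bm 0$ for almost all initial data, as claimed.
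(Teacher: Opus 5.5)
Your proposal is correct and follows the paper's proof. You set $\rho=1/v$, note that \ref{item:Rintegral} and \ref{item:Tintegral} are literally the integrability hypotheses of Lemma~\ref{lem:ags_general} and Lemma~\ref{lem:ags_general_trigo}, compute $\rho_t+\operatorname{div}_{\bm\theta}(\rho\bm f)=w/v^2$, and conclude because the zero set of the nonzero function $w$ is null. The differences are minor: you prove the measure-zero property via real-analyticity where the paper cites \cite[Lemma 3]{tripathi_hybrid}, and you explicitly verify the $2\pi$-periodicity of $\rho$ required by Lemma~\ref{lem:ags_general_trigo}, which the paper leaves implicit.
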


\begin{proof}
    The functions that are trigonometric in $\bm\theta$ and polynomial, trigonometric, or a mix of both in $t$ are special cases of hybrid polynomials~\cite{dumitrescu2010positive}. Since the zero set of any hybrid polynomial necessarily has zero Lebesgue measure~\cite[Lemma 3]{tripathi_hybrid}, defining $\rho(t,\bm{\theta})=1/v(t,\bm{\theta})$, it follows that $\rho\in\mathcal{C}^1(\mathbb{R}\times\left(\mathbb{T}^n\setminus \{\bm{0}\}\right),\mathbb{R})$;
    \begin{eqnarray*}
			\frac{\partial \rho}{\partial t}(t,{\bm{\theta}})+\operatorname{div}_{\bm{\theta}} (\rho\bm{f})(t,{\bm{\theta}})=\,\frac{w(t,\bm{\theta})}{v(t,\bm{\theta})^2}>0
		\end{eqnarray*}
        for almost all $(t,{\bm{\theta}})\in \mathbb{R}_+\times \mathbb{T}^n$; and integrability conditions are satisfied. The result follows from Lemma~\ref{lem:ags_general} or Lemma~\ref{lem:ags_general_trigo}, depending on the case.
    \end{proof}

\section{SDP formulation of density via Gram representations}\label{sec:SDP_formul_hybrid}

In this section, we obtain an SDP formulation of density for systems that are mixed (polynomial-trigonometric) in $t$ and trigonometric in $\bm\theta$. The functions~\eqref{eq:polynomial_mix} of degree $\bm{deg_{v}}=(deg_{v_{t}}^{pol},deg_{v_{t}}^{rot},\bm{deg_{v_{\theta}}})$ have a matrix representation with respect to a standard basis of monomials~\cite{dumitrescu2010positive} given as columns of the vector
\begin{eqnarray}\label{eq:Standard_basis_hybrid}
	\nonumber\psi_{\bm{n_v}}(t,\bm{\theta})&=& \left[1\, t\, \dots\, t^{n_{v_t}^{pol}}\right]\otimes \left[1\, {\rm{e}}^{i\,t}\,\dots \, {\rm{e}}^{i\,n_{v_t}^{rot}\,t}\right] \otimes \left(\bigotimes_{j=n}^{1}\left[1\, {\rm{e}}^{i\,\theta_j}\,\dots \, {\rm{e}}^{i\,n_{v_\theta}(j)\,\theta_j}\right]\right)\\
    &=\vcentcolon& \widetilde{\psi}_{n_{v_t}^{pol}}(t)\otimes \psi_{n_{v_t}^{rot}}(t) \otimes \psi_{\bm{n_{v_\theta}}}(\bm{\theta}),
\end{eqnarray}
where $n_{v_{t}}^{pol}=\lceil deg_{v_t}^{pol}/2\rceil$, $n_{v_{t}}^{rot}=deg_{v_t}^{rot}$, $\bm{n_{v_{\theta}}}=\bm{deg_{v_{\theta}}}$; and $\bm{n_{v}}=(n_{v_{t}}^{pol},n_{v_{t}}^{rot},\bm{n_{v_{\theta}}})$ is called the \textit{representation-size vector} of $v$.
The Kronecker product notation $\otimes_{j=n}^{1}$ is to be read as the index $n$ appearing in the leftmost position.

A \textit{Gram matrix representation}~\cite{dumitrescu2010positive} associated with the function $v(t,\bm{\theta})$ in~\eqref{eq:polynomial_mix} is defined as a Hermitian matrix $V$ of size $\left\llbracket \bm{n_v}\right\rrbracket\vcentcolon= \left(n_{v_t}^{pol}+1\right) \left(n_{v_t}^{rot}+1\right) \Big(\prod_{j=1}^n\left(n_{v_\theta}(j)+1\right)\Big)$ satisfying
\begin{equation}\label{eq:gram_matrix}
	v(t,\bm{\theta})=\psi_{\bm{n_v}}(t,\bm{\theta})^\dagger\,V\,\psi_{\bm{n_v}}(t,\bm{\theta}).   
\end{equation}
The coefficients of $v(t,\bm\theta)$ are known to be linear functions of any Gram representation $V$ due to \textit{trace parameterization property}~\cite{dumitrescu2010positive}, which states that \begin{eqnarray}\label{eq:trace_param_hybrid}
     v_{m_1,m_2,\bm{k}}=\ch{m_1,m_2}{k}{n_v}{V}\vcentcolon= \operatorname{Tr}\left(\left(B_{m_1}^{n_{{v}_t}^{pol}+1}\otimes T_{m_2}^{n_{v_t}^{rot}+1}\otimes T_{k_n}^{n_{v_\theta}(n)+1}\otimes\ldots\otimes T_{k_1}^{n_{v_\theta}(1)+1}\right)V\right),
\end{eqnarray}
where $T_k^n$ is the $k^{th}$ elementary \textit{Toeplitz matrix} of size $n$, that is, a $(0,1)-$matrix satisfying $\left(T_k^n\right)_{(i,j)}=1$ if and only if $j-i=k$; and  $B_k^n$ is the $k^{th}$ elementary \textit{Hankel matrix} of size $n$, that is, a $(0,1)-$matrix satisfying $\left(B_k^n\right)_{(i,j)}=1$ if and only if $i+j-2=k$. Thus, $B_0^1=T_0^1=[1]$ and $B_k^1=T_k^1=[0]$ for all $k\ne 0$, and the expression~\eqref{eq:trace_param_hybrid} can be simplified for the case when $v(t,\bm\theta)$ is polynomial or trigonometric in $t$. Also, two Gram representations $A$ and $B$ correspond to the same function if and only if $\ch{m_1,m_2}{k}{n_v}{A}=\ch{m_1,m_2}{k}{n_v}{B}$ for all $m_1,m_2,\bm{k}$. 

\subsection{Positive semi-definiteness and derivative relations in Gram representations}\label{sec:known_results_hybrid}

In this section, we present some results on Gram representations. The first result by Dumitrescu~\cite{dumitrescu2010positive} provides a way to enforce positive semidefiniteness of functions of the form~\eqref{eq:polynomial_mix} in a convex program. 

\begin{lemma}[Consequence of~\protect{\cite[Theorem 2]{dumitrescu2010positive}}]\label{lem:PD_dumitrescu}
The function $v(t,\bm{\theta})$ of the form~\eqref{eq:polynomial_mix} is sum-of-squares if and only if there exists a positive semidefinite matrix $V$ such that~\eqref{eq:gram_matrix} holds.    
\end{lemma}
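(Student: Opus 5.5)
The plan is to prove the two directions separately, using the Gram identity~\eqref{eq:gram_matrix} and the fact that the basis vector $\psi_{\bm{n_v}}(t,\bm\theta)$ is built from linearly independent monomials. The sum-of-squares notion here is the hybrid one: $v=\sum_\ell |h_\ell|^2$, where each $h_\ell(t,\bm\theta)=\bm{h_\ell}^\dagger\psi_{\bm{n_v}}(t,\bm\theta)$ is a complex polynomial in $t$, in ${\rm e}^{it}$ and in ${\rm e}^{i\theta_j}$, with only nonnegative exponents bounded by the representation-size vector $\bm{n_v}$.

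\emph{($\Leftarrow$)} Suppose $V\succeq 0$ satisfies~\eqref{eq:gram_matrix}. Factor $V=\sum_\ell \bm{h_\ell}\bm{h_\ell}^\dagger$, for instance via an eigendecomposition with the square roots of the nonnegative eigenvalues absorbed into the eigenvectors. Then $v=\psi^\dagger V\psi=\sum_\ell |\bm{h_\ell}^\dagger\psi|^2$, which is a sum of squares of functions of the required form. This direction is immediate.

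\emph{($\Rightarrow$)} Suppose $v=\sum_\ell |h_\ell|^2$ with each $h_\ell$ in the span of the entries of $\psi_{\bm{n_v}}$. Write $h_\ell=\bm{h_\ell}^\dagger\psi_{\bm{n_v}}$ and set $V=\sum_\ell\bm{h_\ell}\bm{h_\ell}^\dagger\succeq0$; then~\eqref{eq:gram_matrix} holds by expanding the squares. The substantive point, and the step I expect to be the main obstacle, is justifying that the factors $h_\ell$ can be taken within the span of $\psi_{\bm{n_v}}$. Here the $t$-polynomial degree must be at most $\lceil deg^{pol}_{v_t}/2\rceil$, and the trigonometric degrees at most $deg^{rot}_{v_t}$ and $\bm{deg_{v_\theta}}$ once each factor is multiplied by a unimodular monomial.

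\begin{itemize}
\item \textbf{Polynomial part.} I would argue by leading coefficients. Let $d$ be the maximal $t$-degree among the $h_\ell$. The coefficient of $t^{2d}$ in $\sum_\ell|h_\ell|^2$ is $\sum_\ell |c_\ell(t,\bm\theta)|^2$, where $c_\ell$ is the $t^d$-coefficient of $h_\ell$. This is a sum of squares that is nonzero as a function, so no cancellation occurs, and hence $2d\le deg^{pol}_{v_t}$.
\item \textbf{Trigonometric part.} Multiplying $h_\ell$ by ${\rm e}^{-i(m_2 t+\bm k\cdot\bm\theta)}$ leaves $|h_\ell|^2$ unchanged. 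This allows the exponent ranges to be shifted to start at $0$. A Newton-polytope (support) argument, comparing extreme frequencies coordinatewise on the box support, then bounds the width of each range by the corresponding degree of $v$.
\end{itemize}

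This degree bookkeeping is exactly the content of~\cite[Theorem 2]{dumitrescu2010positive}. In the write-up I would cite it for this step, and state explicitly that ``sum-of-squares'' is understood with factors of the degree dictated by $\bm{n_v}$, so that the equivalence is precise.
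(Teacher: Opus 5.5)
Your two one-line expansions are correct, and they are all the lemma needs. $V=\sum_\ell\bm{h_\ell}\bm{h_\ell}^\dagger\succeq 0$ gives $\psi_{\bm{n_v}}^\dagger V\psi_{\bm{n_v}}=\sum_\ell|\bm{h_\ell}^\dagger\psi_{\bm{n_v}}|^2$, and the converse is the same expansion read backwards. The paper gives no proof beyond the citation. The cited theorem is a Gram parameterization of sums of squares whose factors have a \emph{prescribed} degree, i.e.\ lie in the span of the chosen basis. Later the paper only uses the direction $V\succeq 0\Rightarrow v\ge 0$. Your leading-coefficient argument in $t$ is also sound: $t$ is real, so the $t^{2d}$-coefficient is $\sum_\ell|c_\ell|^2$, with no cross terms between different $\ell$.

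The trigonometric half of your degree bookkeeping, however, would fail. The highest-frequency part in $\theta_j$ of $\sum_\ell|h_\ell|^2$ is $\sum_\ell h_\ell^{+}\overline{h_\ell^{-}}$. Here $h_\ell^{\pm}$ collect the terms of $h_\ell$ with maximal, respectively minimal, exponent in $\theta_j$. This sum is not a sum of Hermitian squares, and it can cancel. For example, $|1+{\rm e}^{i\theta}|^2+|1-{\rm e}^{i\theta}|^2=4$ has degree $0$, yet both factors have width $1$. So no support argument can bound the factors of a given decomposition.

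Worse, the statement you are trying to derive is false in general once there are two trigonometric variables, e.g.\ $n\ge 2$, or ${\rm e}^{it}$ together with $\theta$. Take a nonnegative trigonometric polynomial $r$ that is not SOS; such polynomials exist. Each $r+\epsilon$ is strictly positive, hence SOS by Dritschel's theorem. Suppose each $r+\epsilon$ had a PSD Gram matrix $Q_\epsilon$ of the minimal size. Then $\operatorname{Tr}(Q_\epsilon)=r_{\bm 0}+\epsilon$, so the $Q_\epsilon$ are bounded. A limit point would then be a PSD Gram matrix of $r$, which is a contradiction.

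So do not attribute this reduction to the cited theorem; it is not its content, and it is false in general. Instead, take ``sum-of-squares'' to mean a sum of Hermitian squares of elements of $\operatorname{span}\psi_{\bm{n_v}}$, or let the Gram size be arbitrary. Under either reading your two expansions are the complete proof, and the degree discussion should simply be dropped.
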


The following result gives a Gram representation of all partial derivatives of a function of the form~\eqref{eq:polynomial_mix} in terms of the Gram representation of the function itself. It is a generalization of~\cite[Proposition 1]{tripathi_hybrid}, when the state space has no Euclidean component, to functions that are mixed (polynomial-trigonometric) in $t$.

\begin{lemma}[]\label{lem:derivative}
	A Gram representation of $v_{t}(t,\bm\theta)$ is given by $D_{t}^\dagger\, V\,+V D_{t}$. Moreover, a Gram representation of $v_{\theta_l}(t,\bm\theta)$ is given by $D_{\theta_l}^\dagger\, V\,+V D_{\theta_l}$, where \begin{eqnarray*}
		D_{\theta_l}&\vcentcolon=&I_{n_{v_t}^{pol}+1}\otimes I_{n_{v_t}^{rot}+1}\otimes\left(\bigotimes_{j=n}^{l+1} I_{n_{v_\theta}(j)+1}\right)\otimes\, M_{n_{v_\theta}(l)}\,\otimes \left(\bigotimes_{j=l-1}^{1} I_{n_{v_\theta}(j)+1}\right),\\
		D_{t}&\vcentcolon=& \left(\widetilde{M}_{n_{v_t}^{pol}}\oplus{M}_{n_{v_t}^{rot}}\right) \otimes \left(\bigotimes_{j=n}^{1} I_{n_{v_\theta}(j)+1}\right),\text{ where}\\
         \widetilde{M}_k&=&\begin{cases}\begin{bmatrix}
			0 & 0 & 0 & \cdots & 0 \\
			1 & 0 & 0 & \ddots & 0 \\
			0 & 2 & 0 & \ddots & 0 \\
			\vdots & \vdots & \ddots & \ddots & \vdots \\
			0 & 0 & \cdots & k & 0 
		\end{bmatrix}&,\, k\ge 1\\
        [0] &,\, k=0\end{cases},\text{ and }  {M}_{k}=\begin{cases}\begin{bmatrix}
			0 & 0 & 0 &\cdots & 0 \\
			0 & i & 0 & \ddots & 0 \\
            0 & 0 & 2i & \ddots & 0 \\
			\vdots & \vdots & \ddots & \ddots & \vdots \\
			0 & 0 & \cdots & 0 & k i 
		\end{bmatrix}&,\, k\ge 1\\
        [0] &,\, k=0\end{cases}.
\end{eqnarray*}  
\end{lemma}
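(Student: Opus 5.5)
We verify the claim by differentiating the basis vector $\psi_{\bm{n_v}}$ and applying the product rule to the Gram identity~\eqref{eq:gram_matrix}. Since $V$ is constant, for any variable $s\in\{t,\theta_1,\ldots,\theta_n\}$ we have
\begin{equation*}
v_s(t,\bm\theta)=\left(\partial_s\psi_{\bm{n_v}}\right)^\dagger V\,\psi_{\bm{n_v}}+\psi_{\bm{n_v}}^\dagger V\left(\partial_s\psi_{\bm{n_v}}\right).
\end{equation*}
It therefore suffices to produce a constant matrix $D_s$ with $\partial_s\psi_{\bm{n_v}}=D_s\psi_{\bm{n_v}}$. Substituting gives $v_s=\psi_{\bm{n_v}}^\dagger\left(D_s^\dagger V+VD_s\right)\psi_{\bm{n_v}}$, which is exactly a Gram representation in the sense of~\eqref{eq:gram_matrix}. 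Note that $D_s^\dagger V+VD_s$ is Hermitian because $V$ is.

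First we treat the one-dimensional factors. For the trigonometric factor $\psi_k(x)=[1\ {\rm e}^{ix}\ \cdots\ {\rm e}^{ikx}]^\top$, differentiating the $j$-th entry gives $ij\,{\rm e}^{ijx}$, so $\psi_k'(x)=M_k\psi_k(x)$ with the diagonal $M_k$ of the statement. For the polynomial factor $\widetilde\psi_k(t)=[1\ t\ \cdots\ t^k]^\top$, the $j$-th entry differentiates to $j\,t^{j-1}$, which is $j$ times the previous entry. Hence $\widetilde\psi_k'(t)=\widetilde M_k\widetilde\psi_k(t)$ with the subdiagonal matrix $\widetilde M_k$. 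The degenerate case $k=0$ gives $[0]$ in both cases.

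Next we handle the Kronecker structure using the mixed product property $(A\otimes B)(x\otimes y)=Ax\otimes By$. For $\theta_l$, only the factor $\psi_{n_{v_\theta}(l)}(\theta_l)$ depends on $\theta_l$. Its derivative is obtained by replacing that factor with $M_{n_{v_\theta}(l)}\psi_{n_{v_\theta}(l)}$ and all others with identities, which yields $D_{\theta_l}$ in the ordering $j=n,\ldots,1$ of~\eqref{eq:Standard_basis_hybrid}.

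For $t$, two factors depend on $t$, so the product rule gives
\begin{equation*}
\partial_t(\widetilde\psi\otimes\psi)=\widetilde M\widetilde\psi\otimes\psi+\widetilde\psi\otimes M\psi=(\widetilde M\otimes I+I\otimes M)(\widetilde\psi\otimes\psi)=(\widetilde M\oplus M)(\widetilde\psi\otimes\psi).
\end{equation*}
Tensoring with the identity on the $\bm\theta$ factors then gives $D_t$.

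No step is a real obstacle. The only care needed is bookkeeping: keeping the Kronecker factors in the order used in~\eqref{eq:Standard_basis_hybrid}, so that $M_{n_{v_\theta}(l)}$ sits in the correct slot, and confirming that the Kronecker sum convention of the Notations matches the product rule ordering. The argument also covers the purely polynomial and purely trigonometric cases automatically, since the corresponding matrices are then $[0]$.
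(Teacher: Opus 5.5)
Your proposal is correct and follows essentially the same route as the paper. Like the paper, you establish $\partial_s\psi_{\bm{n_v}}=D_s\psi_{\bm{n_v}}$ from the one-dimensional derivative matrices $\widetilde{M}_k$ and $M_k$ and the mixed-product property, with the Kronecker sum coming from the product rule on the two $t$-dependent factors, and then apply the product rule to $\psi_{\bm{n_v}}^\dagger V\psi_{\bm{n_v}}$.
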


\begin{proof}
We first prove the result for $v_t(t,\bm{\theta})$. Differentiating~\eqref{eq:Standard_basis_hybrid} with respect to $t$, we get
\begin{eqnarray*}
\frac{\partial \psi_{\bm{n_v}}}{\partial t}&=&\frac{\partial\widetilde{\psi}_{n_{v_t}^{pol}}}{\partial t}(t)\otimes \psi_{n_{v_t}^{rot}}(t) \otimes \psi_{\bm{n_{v_\theta}}}(\bm{\theta})+ \widetilde{\psi}_{n_{v_t}^{pol}}(t)\otimes \frac{\partial{\psi}_{n_{v_t}^{rot}}}{\partial t}(t) \otimes \psi_{\bm{n_{v_\theta}}}(\bm{\theta})\\
&=&\left(\widetilde{M}_{n_{v_t}^{pol}}\,\widetilde{\psi}_{n_{v_t}^{pol}}(t)\right)\otimes \psi_{n_{v_t}^{rot}}(t) \otimes \psi_{\bm{n_{v_\theta}}}(\bm{\theta})+ \widetilde{\psi}_{n_{v_t}^{pol}}(t)\otimes \left({M}_{n_{v_t}^{rot}}\,{\psi}_{n_{v_t}^{rot}}(t)\right) \otimes \psi_{\bm{n_{v_\theta}}}(\bm{\theta})\\
&=& \left(\widetilde{M}_{n_{v_t}^{pol}}\otimes I_{n_{v_t}^{rot}+1} \otimes \left(\bigotimes_{j=n}^{1} I_{n_{v_\theta}(j)+1}\right)+I_{n_{v_t}^{pol}+1}\otimes {M}_{n_{v_t}^{rot}} \otimes \left(\bigotimes_{j=n}^{1} I_{n_{v_\theta}(j)+1}\right)\right)\psi_{\bm{n_v}}\\
&=&  \left(\left(\widetilde{M}_{n_{v_t}^{pol}}\otimes I_{n_{v_t}^{rot}+1}+I_{n_{v_t}^{pol}+1}\otimes {M}_{n_{v_t}^{rot}}\right) \otimes \left(\bigotimes_{j=n}^{1} I_{n_{v_\theta}(j)+1}\right)\right)\psi_{\bm{n_v}}\\
&=&\left(\left(\widetilde{M}_{n_{v_t}^{pol}}\oplus{M}_{n_{v_t}^{rot}}\right) \otimes \left(\bigotimes_{j=n}^{1} I_{n_{v_\theta}(j)+1}\right)\right)\psi_{\bm{n_v}}\\
&=& D_t \psi_{\bm{n_v}}.
\end{eqnarray*}
Thus, $v_t=\frac{\partial \psi_{\bm{n_v}}^\dagger}{\partial t}V\psi_{\bm{n_v}}+\psi_{\bm{n_v}}^\dagger V\frac{\partial \psi_{\bm{n_v}}}{\partial t}=\psi_{\bm{n_v}}^\dagger(D_t^\dagger V+V D_t) \psi_{\bm{n_v}}$. Similarly, the second part follows since $\frac{\partial \psi_{\bm{n_v}}}{\partial\theta_l}=D_{\theta_l}\psi_{\bm{n_v}}$.
\end{proof}

The following result gives a Gram representation of the product of functions of the form~\eqref{eq:polynomial_mix} in terms of Gram representations of the individual functions.

\begin{lemma}[\protect{\cite[Proposition 2]{tripathi_hybrid}}]\label{lem:product}
	A Gram representation of the product of functions $v(t,\bm{\theta})$ and $p(t,\bm{\theta})$ of the form~\eqref{eq:polynomial_mix} is given as $X^\dagger\left(V\otimes P\right) X$, where $X$ is a $\left\llbracket\bm{n_v}\right\rrbracket\left\llbracket\bm{n_p}\right\rrbracket\times \left\llbracket\bm{n_v}+\bm{n_p}\right\rrbracket$ matrix defined as \begin{equation}\label{eq:selection_matrix}
	X_{i,j}=\begin{cases}
		1, &\text{ if }(\psi_{\bm{n_v}} \otimes \psi_{\bm{n_p}})_i=(\psi_{\bm{n_v}+\bm{n_p}})_j\\
		0, &\text{ otherwise}
	\end{cases}.
\end{equation}
\end{lemma}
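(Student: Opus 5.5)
We prove the product formula by working at the level of the basis vectors and then transferring the identity to the Gram matrices. The key claim is
\[
\psi_{\bm{n_v}}(t,\bm\theta)\otimes\psi_{\bm{n_p}}(t,\bm\theta)=X\,\psi_{\bm{n_v}+\bm{n_p}}(t,\bm\theta).
\]
Once this holds, the mixed-product property of the Kronecker product gives
\begin{align*}
v(t,\bm\theta)\,p(t,\bm\theta)&=\big(\psi_{\bm{n_v}}^\dagger V\psi_{\bm{n_v}}\big)\otimes\big(\psi_{\bm{n_p}}^\dagger P\psi_{\bm{n_p}}\big)\\
&=(\psi_{\bm{n_v}}\otimes\psi_{\bm{n_p}})^\dagger (V\otimes P)(\psi_{\bm{n_v}}\otimes\psi_{\bm{n_p}})\\
&=\psi_{\bm{n_v}+\bm{n_p}}^\dagger\, X^\dagger (V\otimes P) X\,\psi_{\bm{n_v}+\bm{n_p}}.
\end{align*}
Here the first equality holds because both factors are scalars. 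The matrix $X^\dagger(V\otimes P)X$ is Hermitian, since $V\otimes P$ is Hermitian. It has size $\llbracket\bm{n_v}+\bm{n_p}\rrbracket$, which matches the basis $\psi_{\bm{n_v}+\bm{n_p}}$. Hence it is a Gram representation of $vp$ in the sense of~\eqref{eq:gram_matrix}.

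To establish the claim, first describe the entries of each vector. Every entry of $\psi_{\bm{n_v}}$ has the form $t^{a}\,{\rm e}^{i b t}\,{\rm e}^{i\bm{c}\cdot\bm\theta}$ with $0\le a\le n_{v_t}^{pol}$, $0\le b\le n_{v_t}^{rot}$ and $\bm 0\le \bm c\le \bm{n_{v_\theta}}$. Every entry of $\psi_{\bm{n_p}}$ has the analogous form with the bounds of $\bm{n_p}$. An entry of the Kronecker product is the product of one entry from each, namely $t^{a+a'}{\rm e}^{i(b+b')t}{\rm e}^{i(\bm c+\bm c')\cdot\bm\theta}$. Its exponents satisfy $0\le a+a'\le n_{v_t}^{pol}+n_{p_t}^{pol}$ and the corresponding bounds for the other exponents. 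Therefore this product appears as exactly one entry of $\psi_{\bm{n_v}+\bm{n_p}}$, because the entries of a standard basis vector are distinct monomials.

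It follows that each row $i$ of $X$, as defined in~\eqref{eq:selection_matrix}, contains exactly one $1$, in the column $j$ where the monomials coincide. Consequently $(X\psi_{\bm{n_v}+\bm{n_p}})_i$ equals the $i$-th entry of $\psi_{\bm{n_v}}\otimes\psi_{\bm{n_p}}$, which proves the claim.

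The main point needing care is the well-definedness of $X$. We must check that the monomials $t^a{\rm e}^{ibt}{\rm e}^{i\bm c\cdot\bm\theta}$ with distinct exponent tuples are distinct functions. This ensures the "if and only if" in~\eqref{eq:selection_matrix} selects a unique column, rather than several columns or none. Distinctness is immediate because the exponent tuples index linearly independent functions on $\mathbb{R}\times\mathbb{T}^n$.

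We also note that $\psi_{\bm{n_v}+\bm{n_p}}$ is built with the ordering of~\eqref{eq:Standard_basis_hybrid}, with the $\theta$-factors in reversed Kronecker order. The argument does not depend on this ordering, since $X$ is defined by matching entries directly.
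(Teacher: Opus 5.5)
Your proof is correct and follows the same route as the paper. It first establishes $\psi_{\bm{n_v}}\otimes\psi_{\bm{n_p}}=X\,\psi_{\bm{n_v}+\bm{n_p}}$ and then applies the mixed-product property of the Kronecker product to the scalar quadratic forms $\psi_{\bm{n_v}}^\dagger V\psi_{\bm{n_v}}$ and $\psi_{\bm{n_p}}^\dagger P\psi_{\bm{n_p}}$. The paper only asserts the identity for $X$, whereas you also justify it by checking that each product monomial $t^{a+a'}\mathrm{e}^{i(b+b')t}\mathrm{e}^{i(\bm{c}+\bm{c'})\cdot\bm\theta}$ appears exactly once in $\psi_{\bm{n_v}+\bm{n_p}}$.
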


\begin{proof}
    The idea of the proof originally appears in~\cite[Proposition 2]{tripathi_hybrid}, but since it is brief, we reproduce it here for the reader's convenience. The matrix $X$ satisfies $\psi_{\bm{n_v}} \otimes \psi_{\bm{n_p}} = X\, \psi_{\bm{n_v}+\bm{n_p}}$, hence $v(t,\bm{\theta}) p(t,\bm{\theta})=(\psi_{\bm{n_v}}^\dagger\, V\,  \psi_{\bm{n_v}}) (\psi_{\bm{n_p}}^\dagger\, P\,  \psi_{\bm{n_p}})=(\psi_{\bm{n_v}} \otimes \psi_{\bm{n_p}})^\dagger (V \otimes P) (\psi_{\bm{n_v}} \otimes \psi_{\bm{n_p}})= \psi_{\bm{n_v}+\bm{n_p}}^\dagger\, X^\dagger(V \otimes P)X \, \psi_{\bm{n_v}+\bm{n_p}}$.
\end{proof}

The above lemmas allow us to convert~\eqref{eq:LieRelationDual} into a convex constraint. We present the result below. 

\begin{proposition}\label{prop:density_condition}
    Given a nonautonomous system~\eqref{eq:system_timevar} which is mixed (polynomial-trigonometric) in $t$ such that all its components $f^{(l)}$ can be represented by matrices of size $\left\llbracket\bm{n_f}\right\rrbracket$, the functions $v(t,\bm{\theta})$ and $w(t,\bm{\theta})$, having the same behavior in $t$ as the system, satisfy the equality~\eqref{eq:LieRelationDual} if their Gram matrices $V$ and $W$ are of sizes $\left\llbracket\bm{n_v}\right\rrbracket$ and $\left\llbracket\bm{n_v}+\bm{n_f}\right\rrbracket$, respectively; and satisfy \begin{eqnarray}\label{eq:LieRelatonDual_Gram}
        \nonumber\ch{m_1,m_2}{k}{n_w}{W+X^\dagger\left(\sum_{l=1}^n\left(D_{\theta_l}^\dagger V+VD_{\theta_l}\right)\otimes F^{(l)}-\sum_{l=1}^n V\otimes\left(D_{\theta_l}^\dagger F^{(l)}+F^{(l)}D_{\theta_l}\right)\right)X}\\
        \hspace{60pt} =\ch{m_1,m_2}{k}{n_v}{-D_t^\dagger V-V D_t}
    \end{eqnarray}
    for all $0\le m_1\le n_{v_t^{pol}}+n_{f_t^{pol}}$, $\lvert m_2\rvert \le n_{v_t^{rot}}+n_{f_t^{rot}}$, and $-(\bm{n_{v_{\theta}}}+\bm{n_{f_{\theta}}})\le \bm{k}\le \bm{n_{v_{\theta}}}+\bm{n_{f_{\theta}}}$.\\
    \noindent Note: The above equation is a convexification of the continuity equation~\eqref{eq:monzon} and is an extension of its counterpart in~\cite[Theorem 1]{tripathi_hybrid} to nonautonomous systems when the state space has no Euclidean component. Additionally, since the operators $\ch{m_1,m_2}{k}{}{\cdot}$ depend on the representation-size vector of the input matrix, the Toeplitz/Hankel factors occurring due to~\eqref{eq:trace_param_hybrid} in the left-hand side and right-hand side of~\eqref{eq:LieRelatonDual_Gram} have distinct sizes.
\end{proposition}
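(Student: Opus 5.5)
The plan is to rewrite the right-hand side of \eqref{eq:LieRelationDual} as a single Gram-type quadratic form in the basis $\psi_{\bm{n_v}+\bm{n_f}}$. We then compare coefficients through the trace parameterization \eqref{eq:trace_param_hybrid}, which is linear in the Gram matrix. Since two functions of the form \eqref{eq:polynomial_mix} coincide if and only if all their coefficients $v_{m_1,m_2,\bm{k}}$ coincide, it suffices to show that \eqref{eq:LieRelatonDual_Gram} states exactly that the coefficients of $w$ equal those of $-v_t-\operatorname{grad}_{\bm\theta}v\cdot\bm f+v\,\operatorname{div}_{\bm\theta}\bm f$, over the full index range of degree at most $\bm{n_v}+\bm{n_f}$.

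The steps, in order, are as follows.

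First, by Lemma~\ref{lem:derivative}, $v_{\theta_l}$ has Gram representation $D_{\theta_l}^\dagger V+VD_{\theta_l}$ of size $\llbracket\bm{n_v}\rrbracket$. Likewise $f^{(l)}_{\theta_l}$ has representation $D_{\theta_l}^\dagger F^{(l)}+F^{(l)}D_{\theta_l}$, where the $D_{\theta_l}$ are now built with size vector $\bm{n_f}$.

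Second, Lemma~\ref{lem:product} gives Gram representations of the products $v_{\theta_l}f^{(l)}$ and $v f^{(l)}_{\theta_l}$ in the basis $\psi_{\bm{n_v}+\bm{n_f}}$: they are $X^\dagger\big((D_{\theta_l}^\dagger V+VD_{\theta_l})\otimes F^{(l)}\big)X$ and $X^\dagger\big(V\otimes(D_{\theta_l}^\dagger F^{(l)}+F^{(l)}D_{\theta_l})\big)X$, respectively. Summing over $l$ and using bilinearity of the Kronecker product and linearity of congruence by $X$, the function $\operatorname{grad}_{\bm\theta}v\cdot\bm f-v\,\operatorname{div}_{\bm\theta}\bm f$ is represented by the matrix $X^\dagger(\cdots)X$ appearing in \eqref{eq:LieRelatonDual_Gram}. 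Adding $W$ then represents $w+\operatorname{grad}_{\bm\theta}v\cdot\bm f-v\operatorname{div}_{\bm\theta}\bm f$.

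Third, $-v_t$ is represented by $-D_t^\dagger V-VD_t$ of size $\llbracket\bm{n_v}\rrbracket$. Applying $\mathscr H_{m_1,m_2,\bm k}$ to both sides yields the coefficients of two functions, and \eqref{eq:LieRelatonDual_Gram} asserts that they agree.

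The main obstacle is bookkeeping over the index ranges, since the two sides use different sizes. I would handle it by noting that the Toeplitz/Hankel trace operator of size vector $\bm{n_v}$ returns $0$ whenever $|m_2|>n_{v_t}^{rot}$, $m_1>2n_{v_t}^{pol}$, or $|k_j|>n_{v_\theta}(j)$. This is consistent, because $v_t$ genuinely has no such coefficients. The $t$-derivative keeps the degree in $t$ of each monomial or lowers it, and the $\widetilde M$ term stays within the $\bm{n_v}$ range. Hence quantifying over the larger index range in the statement imposes coefficient equality for every monomial that can appear on either side, and the functions coincide.

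A second point to check is that $v$ and $w$ with these Gram sizes really have degrees within these ranges. For the polynomial-in-$t$ part, the representation-size vector uses $\lceil\cdot/2\rceil$, so degrees add at most as $2(n_v+n_f)$. The stated $m_1$ range should therefore be read against the Hankel sizes, with any coefficient outside the representable range vanishing automatically. Finally, real-valuedness is inherited from Hermitian Gram matrices, which completes the argument.
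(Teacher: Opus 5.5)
Your proposal is correct and follows essentially the same route as the paper: Gram representations of $v_t$, $v_{\theta_l}$ and $f^{(l)}_{\theta_l}$ come from Lemma~\ref{lem:derivative}, the two products come from Lemma~\ref{lem:product}, and after linearity the functions are equal because their trace parameterizations are equal. Your index-range remark goes further than the paper's proof, which simply asserts that all trace parameterizations coincide. With the literal range $0\le m_1\le n_{v_t}^{pol}+n_{f_t}^{pol}$, the Hankel indices up to $2(n_{v_t}^{pol}+n_{f_t}^{pol})$ are not constrained. Whenever this sum is positive, adding to $W$ the diagonal Gram representation of $t^{2(n_{v_t}^{pol}+n_{f_t}^{pol})}$ keeps every stated constraint, and even $W\ge 0$, yet changes $w$. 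So state explicitly that you prove the result with $m_1$ extended to $2(n_{v_t}^{pol}+n_{f_t}^{pol})$, rather than saying the extra coefficients vanish automatically. Also correct the phrase ``degree at most $\bm{n_v}+\bm{n_f}$'' in your first paragraph to match.
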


\begin{proof}
 Using Lemma~\ref{lem:derivative}, $-v_t(t,\bm{\theta})=-\psi_{\bm{n_v}}^\dagger \left(D_t^\dagger V+V D_t\right) \psi_{\bm{n_v}}$.
 The matrices on the left and right sides of~\eqref{eq:LieRelatonDual_Gram} have different sizes, but they give rise to the same function since all their trace parameterizations coincide. Thus using~\eqref{eq:LieRelatonDual_Gram}, \begin{eqnarray*}
     -v_t(t,\bm{\theta})&=&\psi_{\bm{n_v+n_f}}^\dagger\left(W+X^\dagger\left(\sum_{l=1}^n\left(D_{\theta_l}^\dagger V+VD_{\theta_l}\right)\otimes F^{(l)}-\sum_{l=1}^n V\otimes\left(D_{\theta_l}^\dagger F^{(l)}+F^{(l)}D_{\theta_l}\right)\right)X\right)\psi_{\bm{n_v}+\bm{n_f}}\\
     &=&\psi_{\bm{n_v+n_f}}^\dagger\,W\,\psi_{\bm{n_v+n_f}}+\psi_{\bm{n_v+n_f}}^\dagger\left(X^\dagger\left(\sum_{l=1}^n\left(D_{\theta_l}^\dagger V+VD_{\theta_l}\right)\otimes F^{(l)}\right)X\right)\psi_{\bm{n_v+n_f}}\\
     & & -\psi_{\bm{n_v+n_f}}^\dagger\left(X^\dagger\left(\sum_{l=1}^n V\otimes\left(D_{\theta_l}^\dagger F^{(l)}+F^{(l)}D_{\theta_l}\right)\right)X\right)\psi_{\bm{n_v+n_f}}\\
     &=& w(t,\bm{\theta})+\operatorname{grad}_{\bm{\theta}} v(t,\bm{\theta})\cdot \bm{f}(t,\bm{\theta}) - v(t,\bm{\theta})\, \operatorname{div}_{\bm{\theta}} \bm{f}(t,\bm{\theta})
 \end{eqnarray*}
 using Lemmas~\ref{lem:derivative} and~\ref{lem:product}, and the result follows.
\end{proof}

\noindent This result, along with block decomposition defined in the next section, will later allow us to translate Lemma~\ref{lem:ags_polynomial} into a convex programming problem.

\subsection{Block decomposition and positive definiteness}\label{sec:block_decomp_hybrid}

In this section, we define a \textit{block decomposition} of Gram representations~\eqref{eq:gram_matrix} of functions of the form~\eqref{eq:polynomial_mix}. We will establish that this block decomposition paves the way for enforcing positive definiteness of these functions in convex programs.

\begin{definition}[Block decomposition of Gram representation]\label{def:block}
     The Gram representation $V$ of size $\left\llbracket \bm{n_v}\right\rrbracket$ can be realized as a block matrix $V=\left[\operatorname{Block}_{j,k}(V)\right]_{j=1,\ldots,n_{v_t}^{pol}+1}^{k=1,\ldots,n_{v_t}^{pol}+1}$, where each block has size ${\left\llbracket({n_{v_t}^{rot}},\bm{n_{v_\theta}})\right\rrbracket}$. More formally, \begin{equation}
         \begin{aligned}
             \operatorname{Block}_{j,k}(V)=V_{J,K},\ \ \text{ where }J&=\left\{(j - 1) * \left\llbracket({n_{v_t}^{rot}},\bm{n_{v_\theta}})\right\rrbracket + i\ \colon\  i=1,\ldots,\left\llbracket({n_{v_t}^{rot}},\bm{n_{v_\theta}})\right\rrbracket\right\}\\ 
         \text{ and }K&=\left\{(k - 1) * \left\llbracket({n_{v_t}^{rot}},\bm{n_{v_\theta}})\right\rrbracket + i\ \colon \ i=1,\ldots,\left\llbracket({n_{v_t}^{rot}},\bm{n_{v_\theta}})\right\rrbracket\right\}.
         \end{aligned}
     \end{equation}
     Any block $B$ of size ${\left\llbracket({n_{v_t}^{rot}},\bm{n_{v_\theta}})\right\rrbracket}$ can further be realized as $B=\left[\operatorname{SubBlock}_{p,q}(B)\right]_{j=1,\ldots,n_{v_t}^{rot}+1}^{k=1,\ldots,n_{v_t}^{rot}+1}$, where each sub-block has size $\left\llbracket\bm{n_{v_\theta}}\right\rrbracket$. More formally,
     \begin{equation}
         \begin{aligned}
             \operatorname{SubBlock}_{p,q}(B)=B_{P,Q},\ \ \text{ where }P&=\left\{(p - 1) * \left\llbracket\bm{n_{v_\theta}}\right\rrbracket + r\ \colon\  r=1,\ldots,\left\llbracket\bm{n_{v_\theta}}\right\rrbracket\right\}\\ 
         \text{ and }Q&=\left\{(q - 1) * \left\llbracket \bm{n_{v_\theta}}\right\rrbracket + r\ \colon \ r=1,\ldots,\left\llbracket\bm{n_{v_\theta}}\right\rrbracket\right\}.
         \end{aligned}
     \end{equation}
\end{definition}

\begin{example}\label{examp:block_example}
    The function $v(t,\theta)=(t^2+1)(1-\cos\theta)$ is polynomial in $t$ with $\bm{deg_v}=[2,0,1]$, hence representation size vector $\bm{n_v}=[1,0,1]$. A Gram representation $V$ of $v(t,\theta)$ of size $\left\llbracket\bm{n_v}\right\rrbracket=4$ is given by \begin{eqnarray*}
        (t^2+1)(1-\cos\theta)
        &=&\psi_{1,0,1}(t,\theta)^\dagger\left[
                \begin{array}{cc|cc}
                1 & -1/2 & 0 & 0\\
                -1/2 & 0 & 0 & 0\\ \hline
                0 & 0 & 1 & -1/2\\
                0 & 0 & -1/2 & 1
                \end{array}
            \right]\psi_{1,0,1}(t,\theta)\\
        &=&\psi_{1,0,1}(t,\theta)^\dagger \begin{bmatrix}
            \operatorname{Block}_{1,1}(V) & \operatorname{Block}_{1,2}(V)\\
            \operatorname{Block}_{2,1}(V) & \operatorname{Block}_{2,2}(V)
        \end{bmatrix}\psi_{1,0,1}(t,\theta),
    \end{eqnarray*}
    where each $\operatorname{Block}_{i,j}(V)$ is submatrix of size $\left\llbracket({n_{v_t}^{rot}},\bm{n_{v_\theta}})\right\rrbracket=\left\llbracket (0,1)\right\rrbracket=2$. Moreover, since $n_{v_t}^{rot}=0$, $\operatorname{SubBlock}_{1,1}(\operatorname{Block}_{i,j}(V))=\operatorname{Block}_{i,j}(V)$ for all $1\le i,j\le n_{v_t}^{pol}+1$.

    The function $q(t,\theta)=t\sin t\sin\theta$ is mixed (polynomial-trigonometric) in $t$ with $\bm{deg_q}=[1,1,1]$, hence representation size vector $\bm{n_q}=[1,1,1]$. A maximally-sparse Gram representation of size $\left\llbracket \bm{n_q}\right\rrbracket=8$ is
    \begin{equation*}\setlength{\arraycolsep}{6pt}
        Q=
\left[
\begin{array}{cc:cc|cc:cc}
 \phantom{-}0& \phantom{-}0& \phantom{-}0& \phantom{-}0& \phantom{-}0& \phantom{-}0& \phantom{-}0&-1/4\\
 \phantom{-}0& \phantom{-}0& \phantom{-}0& \phantom{-}0& \phantom{-}0& \phantom{-}0&1/4& \phantom{-}0\\
\hdashline
 \phantom{-}0& \phantom{-}0& \phantom{-}0& \phantom{-}0& \phantom{-}0& \phantom{-}0& \phantom{-}0& \phantom{-}0\\
 \phantom{-}0& \phantom{-}0& \phantom{-}0& \phantom{-}0& \phantom{-}0& \phantom{-}0& \phantom{-}0& \phantom{-}0\\ \hline
 \phantom{-}0& \phantom{-}0& \phantom{-}0& \phantom{-}0& \phantom{-}0& \phantom{-}0& \phantom{-}0& \phantom{-}0\\
 \phantom{-}0& \phantom{-}0& \phantom{-}0& \phantom{-}0& \phantom{-}0& \phantom{-}0& \phantom{-}0& \phantom{-}0\\ \hdashline
 \phantom{-}0&1/4& \phantom{-}0& \phantom{-}0& \phantom{-}0& \phantom{-}0& \phantom{-}0& \phantom{-}0\\
-1/4& \phantom{-}0& \phantom{-}0& \phantom{-}0& \phantom{-}0& \phantom{-}0& \phantom{-}0& \phantom{-}0
\end{array}
\right].
    \end{equation*}
 Herein, blocks of size $\left\llbracket (n_{q_t}^{rot},\bm{n_{q_\theta}})\right\rrbracket=\left\llbracket (1,1)\right\rrbracket=4$ are separated by solid lines, and their sub-blocks of size $\left\llbracket \bm{n_{q_\theta}}\right\rrbracket=\left\llbracket (1)\right\rrbracket=2$ are separated by dashed lines.
\end{example}

Due to the structure of the standard basis~\eqref{eq:Standard_basis_hybrid}, the block decomposition enables us to obtain periodic functions of $\bm\theta$ occurring as function-valued coefficients of $t^c {\rm{e}}^{-idt}$ in $v(t,\bm{\theta})$. The above characterization also yields a sufficient condition for obtaining a positive-definite function $v(t,\bm{\theta})$ via convex programming. It is given in the following proposition.

\begin{proposition}\label{prop:PD_condition_gram}
    Given a function $v(t,\bm{\theta})$ of the form~\eqref{eq:polynomial_mix} and its Gram representation $V$ of size $\left\llbracket\bm{n_v}\right\rrbracket$, the function can be written as \begin{eqnarray*}
        v(t,\bm{\theta})=\sum_{c=0}^{2n_{v_t}^{pol}}\sum_{d=-n_{v_t}^{rot}}^{n_{v_t}^{rot}}t^c{\rm{e}}^{-idt}\, \left(\psi_{\bm{n_{v_\theta}}}(\bm{\theta})^\dagger\left(\sum_{p-q=d} \operatorname{SubBlock}\left(\sum_{j+k=c+2} \operatorname{Block}_{j,k}(V)\right)\right)\psi_{\bm{n_{v_\theta}}}(\bm{\theta})\right).
    \end{eqnarray*}
    
    \noindent Consequently, the function $v(t,\bm{\theta})$ satisfies $v(t,\bm{0})=0$ if and only if \begin{eqnarray}\label{eq:iterated_block_main_prop}
        \bm{1}^\top\left(\sum_{p-q=d} \operatorname{SubBlock}\left(\sum_{j+k=c+2} \operatorname{Block}_{j,k}(V)\right)\right)\bm{1}=0,
    \end{eqnarray}
    for each $c=0,\ldots,2n_{v_t}^{pol}$ and $d=-n_{v_t}^{rot},\ldots,n_{v_t}^{rot}$. In addition, if $v(t,\bm{\theta})\ge \varepsilon\, \gamma(\bm{\theta})$ for some $\varepsilon>0$ and $\gamma(\bm{\theta})=n-\sum_{i=1}^n \cos(\theta_i)$, then $v(t,\bm{\theta})$ is positive definite.
\end{proposition}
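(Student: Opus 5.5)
The plan is to expand the quadratic form $\psi_{\bm{n_v}}^\dagger V\psi_{\bm{n_v}}$ by exploiting the Kronecker structure of the standard basis~\eqref{eq:Standard_basis_hybrid}. Write $\widetilde{\psi}_{n_{v_t}^{pol}}(t)=[t^{j-1}]_{j}$ and $\psi_{n_{v_t}^{rot}}(t)=[{\rm e}^{i(p-1)t}]_p$. By the definition of the Kronecker product, the entry of $\psi_{\bm{n_v}}$ with index $(j-1)\left\llbracket(n_{v_t}^{rot},\bm{n_{v_\theta}})\right\rrbracket+(p-1)\left\llbracket\bm{n_{v_\theta}}\right\rrbracket+r$ equals $t^{j-1}{\rm e}^{i(p-1)t}(\psi_{\bm{n_{v_\theta}}}(\bm\theta))_r$. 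These are exactly the index sets $J$ and $P$ in Definition~\ref{def:block}.

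\textbf{Step 1: expand into blocks.} Since $t$ is real, $\overline{t^{j-1}}=t^{j-1}$, and the quadratic form becomes
\[
v=\sum_{j,k}t^{j+k-2}\sum_{p,q}{\rm e}^{-i(p-q)t}\,\psi_{\bm{n_{v_\theta}}}^\dagger\operatorname{SubBlock}_{p,q}(\operatorname{Block}_{j,k}(V))\psi_{\bm{n_{v_\theta}}}.
\]
Two observations finish this step:
\begin{itemize}
\item The map $B\mapsto\operatorname{SubBlock}_{p,q}(B)$ is linear, so it commutes with the sum over blocks.
\item Grouping terms with $c=j+k-2$ (that is, $j+k=c+2$) and $d=p-q$ gives the displayed formula.
\end{itemize}
The index ranges are $c\in[0,2n_{v_t}^{pol}]$ and $d\in[-n_{v_t}^{rot},n_{v_t}^{rot}]$. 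The only care needed here is the ordering convention $\bigotimes_{j=n}^1$, but it only affects the $\bm\theta$-part and is absorbed into $\psi_{\bm{n_{v_\theta}}}$.

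\textbf{Step 2: the characterization of $v(t,\bm 0)=0$.} At $\bm\theta=\bm0$ we have $\psi_{\bm{n_{v_\theta}}}(\bm 0)=\bm 1$, so
\[
v(t,\bm 0)=\sum_{c,d}t^c{\rm e}^{-idt}\,\alpha_{c,d},
\]
where $\alpha_{c,d}$ is the left-hand side of~\eqref{eq:iterated_block_main_prop}.
\begin{itemize}
\item Sufficiency is immediate: if every $\alpha_{c,d}=0$, the sum vanishes.
\item Necessity needs the linear independence of the functions $t^c{\rm e}^{-idt}$ over $\mathbb{C}$ on $\mathbb{R}$.
\end{itemize}
I expect necessity to be the main obstacle. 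I would prove independence as follows. Suppose $\sum_{c,d}\alpha_{c,d}t^c{\rm e}^{-idt}\equiv0$. Group it as $\sum_d P_d(t){\rm e}^{-idt}$ with polynomials $P_d$. Take $D$ maximal with $P_D\ne0$ and apply the standard induction on the number of distinct frequencies: multiply by ${\rm e}^{iDt}$ and differentiate $\deg P_D+1$ times. This kills the $P_D$ term and leaves a nonzero combination with fewer frequencies, because $(d\mapsto$ multiplication by $(-i(d-D))$ plus differentiation$)$ preserves the degree of $P_d$ for $d\ne D$. This contradicts the induction hypothesis.

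An alternative for the necessity direction is to use uniqueness of coefficients: the $\alpha_{c,d}$ are exactly the coefficients $\sum_{\bm k}v_{c,d,\bm k}$ obtained via trace parameterization~\eqref{eq:trace_param_hybrid}, and these are determined by the function. That uniqueness itself rests on the same independence fact.

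\textbf{Step 3: positive definiteness.} Assume $v\ge\varepsilon\gamma$ and also that~\eqref{eq:iterated_block_main_prop} holds, which is the intended reading of "in addition".
\begin{itemize}
\item Since $\gamma\ge0$, $v$ is positive semidefinite.
\item If $\bm\theta\ne\bm0$ in $\mathbb{T}^n=[0,2\pi)^n$, some $\cos\theta_i<1$, so $\gamma(\bm\theta)>0$ and hence $v(t,\bm\theta)>0$.
\item At $\bm\theta=\bm0$, Step 2 gives $v(t,\bm0)=0$.
\end{itemize}
So $v$ vanishes exactly when $\bm\theta=\bm0$, which is Definition~\ref{def:positivedefinitehybrid}.
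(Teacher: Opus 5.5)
Your proof is correct and follows the paper's argument: you expand $\psi_{\bm{n_v}}^\dagger V\psi_{\bm{n_v}}$ along the Kronecker/block structure, group terms by $c=j+k-2$ and $d=p-q$, and evaluate at $\bm\theta=\bm0$ using $\psi_{\bm{n_{v_\theta}}}(\bm0)=\bm1$. You are also more careful than the paper in two places: the paper asserts the ``only if'' direction without invoking the linear independence of the functions $t^c{\rm e}^{-idt}$, and it leaves implicit that positive definiteness needs the vanishing condition~\eqref{eq:iterated_block_main_prop} as well as $v\ge\varepsilon\gamma$.
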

    
\begin{proof}
    We temporarily introduce the notations $\psi^{rot}_{\bm{n_{v}}}=\psi_{n_{v_t}^{rot}}(t)\otimes\psi_{\bm{n_{v_\theta}}}(\bm{\theta})$ and $B_c=\sum_{j+k=c+2} \operatorname{Block}_{j,k}(V)$ for this proof. Using~\eqref{eq:gram_matrix} and Definition~\ref{def:block}, we have
    \begin{eqnarray*}
    &&\hspace{-30pt}v(t,\bm{\theta})\\
    &=&\psi_{\bm{n_v}}(t,\bm{\theta})^\dagger\, V\, \psi_{\bm{n_v}}(t,\bm{\theta})\\
    &=& \begin{bmatrix}
       \psi^{rot}_{\bm{n_{v}}}\\ t\,\psi^{rot}_{\bm{n_{v}}}\\
        \vdots\\
        t^{n_{v_t}^{pol}}\,\psi^{rot}_{\bm{n_{v}}}
    \end{bmatrix}^\dagger\,\begin{bmatrix}
        \operatorname{Block}_{1,1}(V) & \operatorname{Block}_{1,2}(V) & \ldots & \operatorname{Block}_{1,n_{v_t}^{pol}+1}(V)\\
        \operatorname{Block}_{2,1}(V) & \operatorname{Block}_{2,2}(V) & \ldots & \operatorname{Block}_{2,n_{v_t}^{pol}+1}(V)\\
        \vdots & \vdots &  \ddots & \vdots\\
        \operatorname{Block}_{n_{v_t}^{pol}+1,1}(V) & \operatorname{Block}_{n_{v_t}^{pol}+1,2}(V) & \ldots & \operatorname{Block}_{n_{v_t}^{pol}+1,n_{v_t}^{pol}+1}(V)\\
    \end{bmatrix}\, \begin{bmatrix}
        \psi^{rot}_{\bm{n_{v}}}\\ t\,\psi^{rot}_{\bm{n_{v}}}\\
        \vdots\\
        t^{n_{v_t}^{pol}}\,\psi^{rot}_{\bm{n_{v}}}
    \end{bmatrix}\\
    &=& \sum_{j,k=1}^{n_{v_t}^{pol}+1} t^{\/j+k-2} \left({\psi^{rot}_{\bm{n_{v}}}}^\dagger\, \operatorname{Block}_{j,k}(V)\,\psi^{rot}_{\bm{n_{v}}}\right)\\
    &=& \sum_{c=0}^{2n_{v_t}^{pol}}t^c\, \left({\psi^{rot}_{\bm{n_{v}}}}^\dagger\left(\sum_{j+k=c+2} \operatorname{Block}_{j,k}(V)\right){\psi^{rot}_{\bm{n_{v}}}}\right)\\
    &=& \sum_{c=0}^{2n_{v_t}^{pol}}t^c\, \left({\psi^{rot}_{\bm{n_{v}}}}^\dagger\, B_c \, {\psi^{rot}_{\bm{n_{v}}}}\right),
\end{eqnarray*}
where each term in brackets can be expanded as
\begin{eqnarray*}
    &&\hspace{-30pt}{\psi^{rot}_{\bm{n_{v}}}}^\dagger\, B_c \,{\psi^{rot}_{\bm{n_{v}}}}\\
    &=& \begin{bmatrix}
        \psi_{\bm{n_{v_\theta}}}(\bm{\theta})\\
        \vdots\\
        {\rm{e}}^{i n_{v_t}^{rot} t}\,\psi_{\bm{n_{v_\theta}}}(\bm{\theta})
    \end{bmatrix}^\dagger\,\begin{bmatrix}
        \operatorname{SubBlock}_{1,1}(B_c) & \ldots & \operatorname{SubBlock}_{1,n_{v_t}^{rot}+1}(B_c)\\
        \vdots &  \ddots & \vdots\\
        \operatorname{SubBlock}_{n_{v_t}^{rot}+1,1}(B_c) & \ldots & \operatorname{SubBlock}_{n_{v_t}^{rot}+1,n_{v_t}^{rot}+1}(B_c)\\
    \end{bmatrix}\, \begin{bmatrix}
        \psi_{\bm{n_{v_\theta}}}(\bm{\theta})\\ 
        \vdots\\
        {\rm{e}}^{i n_{v_t}^{rot} t}\,\psi_{\bm{n_{v_\theta}}}(\bm{\theta})
    \end{bmatrix}\\
    &=& \sum_{p,q=1}^{n_{v_t}^{rot}+1} {\rm{e}}^{i (q-p) t} \left(\psi_{\bm{n_{v_\theta}}}(\bm{\theta})^\dagger\, \operatorname{SubBlock}_{p,q}(B_c)\,\psi_{\bm{n_{v_\theta}}}(\bm{\theta})\right)\\
    &=& \sum_{d=-n_{v_t}^{rot}}^{n_{v_t}^{rot}}{\rm{e}}^{-i d t}\, \left(\psi_{\bm{n_{v_\theta}}}(\bm{\theta})^\dagger\left(\sum_{p-q=d} \operatorname{SubBlock}_{p,q}(B_c)\right)\psi_{\bm{n_{v_\theta}}}(\bm{\theta})\right).
\end{eqnarray*}
We, thus, obtain the first part of the proposition.

Since $\psi_{\bm{n_{v_\theta}}}(\bm{0})=\bm{1}$, $v(t,\bm{0})=0$ for all $t$ if and only if $\bm{1}^\top\left(\sum_{p-q=d} \operatorname{SubBlock}\left(\sum_{j+k=c+2} \operatorname{Block}_{j,k}(V)\right)\right)\bm{1}=0$ for each $c=0,\ldots,2n_{v_t}^{pol}$ and $d=-n_{v_t}^{rot},\ldots,n_{v_t}^{rot}$. Moreover if $v(t,\bm{\theta})\ge \varepsilon\, \gamma(\bm{\theta})$, then $v(t,\bm{\theta})>0$ for all $t$ and $\bm{\theta}\ne \bm{0}$ since $\gamma(\bm{\theta})\ge 0$ and equality holds precisely at $\bm\theta=\bm{0}$. Thus, $v(t,\bm{\theta})$ is positive definite. 
\end{proof}

\begin{remark}\label{rem:zero_sum_hybrid}
    The condition $\bm{1}^\top A\bm{1}$ is equivalent to $\sum_{j,k}A_{j,k}=0$, that is, the matrix $A$ has zero-sum. The Gram representation $V$ in Example~\ref{examp:block_example} satisfies \begin{eqnarray*}
        \sum_{j+k=2} \operatorname{Block}_{j,k}(V)=\operatorname{Block}_{1,1}(V)&=&\begin{bmatrix}
            1 & -1/2\\ -1/2 & 0
        \end{bmatrix},\\
         \sum_{j+k=3} \operatorname{Block}_{j,k}(V)=\operatorname{Block}_{1,2}(V)+\operatorname{Block}_{2,1}(V)&=&\begin{bmatrix}
            0 & 0\\ 0 & 0
        \end{bmatrix},\\
        \sum_{j+k=4} \operatorname{Block}_{j,k}(V)=\operatorname{Block}_{2,2}(V)&=&\begin{bmatrix}
            1 & -1/2\\ -1/2 & 0
        \end{bmatrix}.
    \end{eqnarray*}
    Since all of these matrices have zero-sum, $v(t,\bm{0})=0$ for all $t$ according to Proposition~\ref{prop:PD_condition_gram}. This can also be verified by the expression of $v(t,\bm{\theta})$ given in Example~\ref{examp:block_example}. The reader should note that, when $n_{v_t}^{rot}=0$, the condition~\eqref{eq:iterated_block_main_prop} means that the sum of all block matrices (of size $\left\llbracket(0,1)\right\rrbracket$) on any antidiagonal should be zero-sum. The condition on blocks for a general $\left\llbracket\bm{n_v}\right\rrbracket$ can be visualized using Figure~\ref{fig:block_condition_visual}(i). 

    Similarly, the Gram representation $Q$ in Example~\ref{examp:block_example} satisfies 
    \begin{eqnarray*}
        \sum_{j+k=2} \operatorname{Block}_{j,k}(Q)=\sum_{j+k=4} \operatorname{Block}_{j,k}(Q)&=&
            \left[\begin{array}{cc:cc}
        0 & 0 & 0 & 0\\
        0 & 0 & 0 & 0\\ \hdashline
        0 & 0 & 0 & 0\\
        0 & 0 & 0 & 0\\
        \end{array}\right],\\
        \sum_{j+k=3} \operatorname{Block}_{j,k}(Q)&=& \left[\begin{array}{cc:cc}
        0 & 0 & 0 & -1/4\\
        0 & 0 & 1/4 & 0\\ \hdashline
        0 & 1/4 & 0 & 0\\
        -1/4 & 0 & 0 & 0\\
        \end{array}\right].
    \end{eqnarray*}
    We subsequently have for each $c=0,1,2$ and $d=-1,0,1$
    \begin{eqnarray*}
        \sum_{p-q=d} \operatorname{SubBlock}\left(\sum_{j+k=c+2} \operatorname{Block}_{j,k}(Q)\right)\in\left\{\pm\begin{bmatrix}
            0 & -1/4\\
            1/4 & 0
        \end{bmatrix},\begin{bmatrix}
            0 & 0\\
            0 & 0
        \end{bmatrix}\right\}.
    \end{eqnarray*}
    Since all the matrices in the set have zero-sum, $q(t,\bm{0})=0$ for all $t$ according to Proposition~\ref{prop:PD_condition_gram}.  The condition on blocks for a general representation-size vector can be visualized using Figure~\ref{fig:block_condition_visual}(iii).
\end{remark}

Propositions~\ref{prop:density_condition} and~\ref{prop:PD_condition_gram} allow us to obtain the SDP formulation of the construction of a time-varying Lyapunov density for nonautonomous systems on the hypertorus that are mixed (polynomial-trigonometric) in time. The result is presented in the following section.

\subsection{The SDP formulation}\label{sec:SDP_formulation_thm_hybrid}

In this section, we obtain an SDP to construct the functions $v(t,\bm{\theta})$ and $w(t,\bm{\theta})$ satisfying the conditions of Lemma~\ref{lem:ags_polynomial}, which can be solved using any modern convex programming solvers. The result is given below.

\begin{theorem}[For systems which are not purely trigonometric in $t$]\label{thm:ags}
    Consider the nonautonomous system~\eqref{eq:system_timevar} which is polynomial, or mixed in $t$; and trigonometric in $\bm\theta$, such that all its components $f^{(l)}$ can be represented by matrices of size $\left\llbracket\bm{n_f}\right\rrbracket$ with $n_{f_t}^{pol}\ne 0$. For a given $\varepsilon>0$ and $\bm{n_v}$ satisfying $n_{v_t}^{pol}\ge n_{f_t}^{pol}+1$, if there exist Gram matrices $V\ge 0$ and $W\ge 0$ of sizes $\left\llbracket\bm{n_v}\right\rrbracket$ and $\left\llbracket\bm{n_v}+\bm{n_f}\right\rrbracket$, respectively, satisfying \begin{enumerate}[label=(C\arabic*)]
        \item\label{item:C1} the linear constraint~\eqref{eq:LieRelatonDual_Gram};
        \item\label{item:C2} $\bm{1}^\top\left(\sum_{p-q=d} \operatorname{SubBlock}\left(\sum_{j+k=c+2} \operatorname{Block}_{j,k}(V)\right)\right)\bm{1}=0$
    for each $c=0,\ldots,2n_{v_t}^{pol}$ and $d=-n_{v_t}^{rot},\ldots,n_{v_t}^{rot}$; and 
        \item\label{item:C3} $V\ge \varepsilon\,\Gamma$, where $\Gamma$ is a Gram representation of $\gamma(\bm{\theta})=n-\sum_{i=1}^n \cos(\theta_i)$ of size $\left\llbracket\bm{n_v}\right\rrbracket$,
    \end{enumerate}
    then for every initial time $t_0$, the set of points $\bm{\theta_0}$ that do not satisfy $\lim_{t\to\infty}\bm\Phi(t;t_0,\bm{\theta_0})=\bm{0}$ has zero Lebesgue measure in $\mathbb{T}^n$.
\end{theorem}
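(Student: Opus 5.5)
The plan is to verify the hypotheses of Lemma~\ref{lem:ags_polynomial} for $v=\psi_{\bm{n_v}}^\dagger V\psi_{\bm{n_v}}$ and $w=\psi_{\bm{n_v}+\bm{n_f}}^\dagger W\psi_{\bm{n_v}+\bm{n_f}}$. Lemma~\ref{lem:ags_polynomial} is invoked with condition \ref{item:Rintegral}, since $\bm f$ is polynomial or mixed in $t$.

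First I check positivity. Because $W\ge 0$, Lemma~\ref{lem:PD_dumitrescu} shows that $w$ is a sum of squares, hence positive semidefinite. Applying \eqref{eq:gram_matrix} to $V-\varepsilon\Gamma\ge 0$ gives $v(t,\bm\theta)\ge\varepsilon\gamma(\bm\theta)$. Constraint \ref{item:C2} gives $v(t,\bm 0)=0$ for all $t$. Proposition~\ref{prop:PD_condition_gram} then makes $v$ positive definite, and in particular $v$ is nonzero. The function $w$ is nonzero as well: otherwise $\rho=1/v$ would satisfy the continuity equation with equality, so \eqref{eq:monzon} would fail on a positive-measure set. If necessary I would treat $w\not\equiv 0$ as the intended reading, or note that it is needed only through the almost-everywhere positivity of $w/v^2$. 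By Proposition~\ref{prop:density_condition}, constraint \ref{item:C1} gives exactly \eqref{eq:LieRelationDual}. The behavior of $v$ and $w$ in $t$ matches that of $\bm f$ because the sizes $\bm{n_v}$ and $\bm{n_v}+\bm{n_f}$ carry the same polynomial and rotational components.

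The main obstacle is the finiteness of $\widehat I_r$ in \eqref{eq:integral_hybridpoly}. I would bound the integrand on $\{\|\bm\theta\|\ge r\}$ as follows.

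\textbf{Numerator.} Each $f^{(l)}$ has polynomial degree in $t$ at most $2n_{f_t}^{pol}$, and its trigonometric factors are bounded. Hence $1+\|\bm f(t,\bm\theta)\|\le C_1(1+t)^{2n_{f_t}^{pol}}$ uniformly in $\bm\theta$.

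\textbf{Denominator.} I need a lower bound on $v$ that grows like $t^{2n_{v_t}^{pol}}$. The naive bound $v\ge\varepsilon\gamma(\bm\theta)\ge\varepsilon c_r>0$ gives no decay in $t$, so it is not enough. I would strengthen the argument through the Gram structure. Write $v=\sum_c t^c\,(\psi^{rot})^\dagger B_c\psi^{rot}$ as in Proposition~\ref{prop:PD_condition_gram}. The top coefficient comes from $\operatorname{Block}_{N,N}(V)$ with $N=n_{v_t}^{pol}+1$. Since $V-\varepsilon\Gamma\ge 0$, and $\Gamma$ can be taken block-diagonal with $\gamma$'s Gram matrix in every diagonal block (so that $\gamma\,(1+t^2+\dots+t^{2n_{v_t}^{pol}})$ is represented), I would choose $\Gamma$ to represent $\gamma(\bm\theta)\sum_{m=0}^{n_{v_t}^{pol}}t^{2m}$. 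This is the natural reading of ``a Gram representation of $\gamma$ of size $\llbracket\bm{n_v}\right\rrbracket$'' that makes the theorem work. In fact the argument needs only the inequality $v\ge\varepsilon\gamma\,\widetilde\psi^\dagger\widetilde\psi$, which follows from $V\ge\varepsilon\Gamma$ for such a $\Gamma$. With this choice, $v(t,\bm\theta)\ge\varepsilon\gamma(\bm\theta)(1+t^2)^{n_{v_t}^{pol}}/C_2$.

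\textbf{Integral.} Combining the two bounds, for $\|\bm\theta\|\ge r$ the integrand is at most
\[
\frac{C(1+t)^{2n_{f_t}^{pol}}}{c_r(1+t)^{2n_{v_t}^{pol}}}\le \frac{C'}{(1+t)^{2}},
\]
using $n_{v_t}^{pol}\ge n_{f_t}^{pol}+1$. This is integrable over $[0,\infty)\times\mathbb T^n$, because $\mathbb T^n$ has finite measure and $\gamma\ge c_r>0$ on the compact set $\|\bm\theta\|\ge r$.

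Lemma~\ref{lem:ags_polynomial} then gives convergence for almost all $(t_0,\bm\theta_0)$. To reach the stated per-$t_0$ conclusion, I would apply the same argument to the shifted system $\bm f(t+t_0,\cdot)$, or use Fubini together with the fact that the flow maps null sets to null sets. The step needing the most care is the choice of $\Gamma$, so that \ref{item:C3} yields the $t$-growth of $v$.
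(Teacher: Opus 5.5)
Your skeleton matches the paper's proof. You use Lemma~\ref{lem:PD_dumitrescu} for semidefiniteness, Proposition~\ref{prop:PD_condition_gram} with (C2)--(C3) for positive definiteness, and Proposition~\ref{prop:density_condition} with (C1) for \eqref{eq:LieRelationDual}, then integrability and Lemma~\ref{lem:ags_polynomial}. Your Fubini/flow-diffeomorphism step for reaching every $t_0$ is more careful than the paper, which simply cites the lemma. The real issue is integrability. The paper settles it in one line (``the integrand behaves like $t^{2(n_{f_t}^{pol}-n_{v_t}^{pol})}$''). That tacitly assumes $v$ has exact $t$-degree $2n_{v_t}^{pol}$, with leading coefficient bounded below on $\{\|\bm\theta\|\ge r\}$. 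You are right that this does not follow from the hypotheses. With $\Gamma$ a zero-padded Gram representation of $\gamma$, (C3) only gives $v\ge\varepsilon\gamma$. The size of $V$ bounds the $t$-degree of $v$ only from above: a positive semidefinite $V$ may have a vanishing last block row and column.

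The gap is that your fix is not a reading of (C3). A block-diagonal $\Gamma$ with $\gamma$'s Gram matrix in every diagonal block represents $\gamma(\bm\theta)\sum_m t^{2m}$, not $\gamma$, so you are strengthening the hypothesis. Likewise, your sentence about $w$ explains why $w\not\equiv 0$ is needed but does not show it holds; nothing in (C1)--(C3) forces it. What you prove is therefore a modified theorem, and you should state both changes as hypotheses, not as interpretations.

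They cannot be dropped. Take $n=2$, $\bm f=(t^2\gamma(\bm\theta),0)$ and $\bm{n_v}=(2,0,1,1)$. Let $V=\Gamma$ be a positive semidefinite Gram matrix of $\gamma$ placed in $\operatorname{Block}_{1,1}$ with zeros elsewhere, with $W=0$ and $\varepsilon\le 1$. Then (C1)--(C3) all hold, because $-v_t-\operatorname{grad}_{\bm\theta}v\cdot\bm f+v\operatorname{div}_{\bm\theta}\bm f\equiv0$, yet $\theta_2$ is constant along every solution. So the statement fails as written. The paper's integrability line, together with its silent use of $w\not\equiv0$, is exactly where its proof breaks.

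Under your strengthened hypotheses, your bound $C'(1+t)^{-2}$ is correct. A lighter repair also suffices: keep (C3) and add $\operatorname{Block}_{N,N}(V)\ge\varepsilon\Gamma'$, with $\Gamma'$ a Gram matrix of $\gamma$ of size $\llbracket(n_{v_t}^{rot},\bm{n_{v_\theta}})\rrbracket$. The coefficient of $t^{2n_{v_t}^{pol}}$ is then at least $\varepsilon\gamma$, and the lower-order coefficients are bounded. Hence $v\ge\tfrac{\varepsilon c_r}{2}t^{2n_{v_t}^{pol}}$ on $\{\|\bm\theta\|\ge r\}$ for all large $t$, where $c_r=\min_{\|\bm\theta\|\ge r}\gamma>0$.
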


\begin{proof}
    Since $V,W\ge 0$, the functions $v(t,\bm{\theta})$ and $w(t,\bm{\theta})$ are positive semidefinite by Lemma~\ref{lem:PD_dumitrescu}. Note that~\textit{\ref{item:C3}} implies $v(t,\bm{\theta})\ge \varepsilon\, \gamma(\bm{\theta})$.  Thus, $v(t,\bm{\theta})$ is positive definite, due to~\textit{\ref{item:C2}} and~\textit{\ref{item:C3}}, by  Proposition~\ref{prop:PD_condition_gram}. Also, equation~\eqref{eq:LieRelationDual} is satisfied, due to~\textit{\ref{item:C1}}, by Proposition~\ref{prop:density_condition}. Since $\mathbb{T}^n$ is compact, the closed subset $\{\bm{\theta}\in\mathbb{T}^n\colon \left\|\bm{\theta}\right\|\ge r \}$ is compact, hence
    \begin{equation*}
        \widehat{I}_r=\int_0^\infty \int_{\left\|\bm{\theta}\right\|\ge r}\frac{1+\left\|\bm{f}(t,\bm{\theta})\right\|}{(1+\|\bm{\theta}\|)\, v(t,\bm{\theta})}\ \mathrm{d}\bm{\theta}\, \mathrm{d}t<\infty,
    \end{equation*}
    since for large $t$, the integrand behaves like $t^{2\left(n_{f_t}^{pol}-n_{v_t}^{pol}\right)}$ where $n_{v_t}^{pol}\ge n_{f_t}^{pol}+1$. The result then follows from Lemma~\ref{lem:ags_polynomial}.
\end{proof}

\begin{figure}[ht]
\centering
\begin{tikzpicture}[scale=0.7]

\def\p{5}      
\def\m{1}      
\def\t{7}      


\begin{scope}[xshift=7.5cm]

\definecolor{coralB}{RGB}{210,140,120}

\foreach \i in {0,...,4} {
  \foreach \j in {0,...,4} {
  \ifnum\numexpr\i+\j\relax=0
      \fill[teal!80] (\i,\j) rectangle ++(1,1);
      \node at (\i+0.5,\j+0.5) {\normalsize $4$};
    \fi
  \ifnum\numexpr\i+\j\relax=1
      \fill[teal!60] (\i,\j) rectangle ++(1,1);
      \node at (\i+0.5,\j+0.5) {\normalsize $3$};
    \fi
    \ifnum\numexpr\i+\j\relax=2
      \fill[teal!40] (\i,\j) rectangle ++(1,1);
      \node at (\i+0.5,\j+0.5) {\normalsize $2$};
    \fi
      \ifnum\numexpr\i+\j\relax=3
      \fill[teal!20] (\i,\j) rectangle ++(1,1);
      \node at (\i+0.5,\j+0.5) {\normalsize $1$};
    \fi
    \ifnum\numexpr\i+\j\relax=4
      \fill[white!20] (\i,\j) rectangle ++(1,1);
      \node at (\i+0.5,\j+0.5) {\normalsize $0$};
    \fi
    \ifnum\numexpr\i+\j\relax=5
      \fill[coralB!20] (\i,\j) rectangle ++(1,1);
      \node at (\i+0.5,\j+0.5) {\normalsize $-1$};
    \fi
    \ifnum\numexpr\i+\j\relax=6
      \fill[coralB!40] (\i,\j) rectangle ++(1,1);
      \node at (\i+0.5,\j+0.5) {\normalsize $-2$};
    \fi
    \ifnum\numexpr\i+\j\relax=7
      \fill[coralB!60] (\i,\j) rectangle ++(1,1);
      \node at (\i+0.5,\j+0.5) {\normalsize $-3$};
    \fi
    \ifnum\numexpr\i+\j\relax=8
      \fill[coralB!80] (\i,\j) rectangle ++(1,1);
      \node at (\i+0.5,\j+0.5) {\normalsize $-4$};
    \fi
  }
}
\draw[step=\m, black, thin] (0,0) grid (\p*\m,\p*\m);

\foreach \i in {0,...,4} {
    \foreach \j in {0,...,4} {
    \ifnum\numexpr\i+\j\relax=2
    \draw[line width=1.5pt]
        (\i*\m,\j*\m) rectangle ++(\m,\m);
    \fi
}
}
\draw[decorate, decoration={brace, amplitude=5pt}]
(0,5.1) -- (1,5.1)
node[midway, above=6pt] {\footnotesize size $\left\llbracket \bm{n_{v_\theta}}\right\rrbracket$};
\node at (2.5,-1) {(ii) $\operatorname{SubBlock}_{p,q}(B)\colon\ p-q=d$};
\node at (2.5,-1.7) {is one of the blocks marked $d$.};
\end{scope}

\begin{scope}[xshift=0cm]

\foreach \i in {0,...,4} {
  \foreach \j in {0,...,4} {
    \ifnum\numexpr\j-\i\relax=4
      \fill[teal!0] (\i,\j) rectangle ++(1,1);
      \node at (\i+0.5,\j+0.5) {\normalsize $0$};
    \fi
    \ifnum\numexpr\j-\i\relax=3
      \fill[teal!10] (\i,\j) rectangle ++(1,1);
      \node at (\i+0.5,\j+0.5) {\normalsize $1$};
    \fi
    \ifnum\numexpr\j-\i\relax=2
      \fill[teal!20] (\i,\j) rectangle ++(1,1);
      \node at (\i+0.5,\j+0.5) {\normalsize $2$};
    \fi
    \ifnum\numexpr\j-\i\relax=1
      \fill[teal!30] (\i,\j) rectangle ++(1,1);
      \node at (\i+0.5,\j+0.5) {\normalsize $3$};
    \fi
    \ifnum\numexpr\j-\i\relax=0
      \fill[teal!40] (\i,\j) rectangle ++(1,1);
      \node at (\i+0.5,\j+0.5) {\normalsize $4$};
    \fi
    \ifnum\numexpr\j-\i\relax=-1
      \fill[teal!50] (\i,\j) rectangle ++(1,1);
      \node at (\i+0.5,\j+0.5) {\normalsize $5$};
    \fi
    \ifnum\numexpr\j-\i\relax=-2
      \fill[teal!60] (\i,\j) rectangle ++(1,1);
      \node at (\i+0.5,\j+0.5) {\normalsize $6$};
    \fi
    \ifnum\numexpr\j-\i\relax=-3
      \fill[teal!70] (\i,\j) rectangle ++(1,1);
      \node at (\i+0.5,\j+0.5) {\normalsize $7$};
    \fi
    \ifnum\numexpr\j-\i\relax=-4
      \fill[teal!80] (\i,\j) rectangle ++(1,1);
      \node at (\i+0.5,\j+0.5) {\normalsize $8$};
    \fi
  }
}

\draw[step=\m, black, thin] (0,0) grid (\p*\m,\p*\m);
\foreach \i in {0,...,4} {
    \foreach \j in {0,...,4} {
    \ifnum\numexpr\j-\i\relax=1
    \draw[line width=1.5pt]
        (\i*\m,\j*\m) rectangle ++(\m,\m);
    \fi
}
}
\draw[decorate, decoration={brace, amplitude=5pt}]
(0,5.1) -- (1,5.1)
node[midway, above=6pt] {\footnotesize size $\left\llbracket (n_{v_t}^{rot},\bm{n_{v_\theta}})\right\rrbracket$};

\node at (2.5,-1) {(i) $\operatorname{Block}_{j,k}(V)\colon\ j+k=c+2$};
\node at (2.5,-1.7) {is one of the blocks marked $c$.};
\end{scope}

\begin{scope}[xshift=15cm]

\foreach \i in {0,...,4} {
  \foreach \j in {0,...,4} {

    \ifnum\numexpr\i-\j\relax=-1
          \draw[line width=3pt,red,densely dotted]
        (\i*\m,\j*\m) rectangle ++(\m,\m);
    \fi

    \draw[step=\m, black, thin] (0,0) grid (\p*\m,\p*\m);

    \draw[step={1/4}, gray!60, line width=0.3pt] (\i,\j) grid ++(1,1);

    \ifnum\numexpr\i-\j\relax=-1
      \foreach \r in {0,...,3} {
        \foreach \q in {0,...,3} {
          \ifnum\numexpr\q+\r\relax=4
            \fill[red!20]
            (\i + \q/4, \j + \r/4)
            rectangle ++(1/4,1/4);
            \draw[line width=1.5pt]
        (\i + \q/4, \j + \r/4) rectangle ++(1/4,1/4);
          \fi
        }
      }
    \fi

  }
}

\draw[decorate, decoration={brace, amplitude=5pt}]
(0,5.1) -- (0.2,5.1)
node[midway, above=6pt] {\footnotesize size $\left\llbracket \bm{n_{v_\theta}}\right\rrbracket$};

\draw[decorate, decoration={brace, amplitude=5pt}]
(3,5.1) -- (4,5.1)
node[midway, above=6pt] {\footnotesize size $\left\llbracket (n_{v_t}^{rot},\bm{n_{v_\theta}})\right\rrbracket$};

\node at (2.5,-1) {(iii) $\operatorname{SubBlock}_{p,q}\left(\operatorname{Block}_{j,k}(V)\right)\colon$};
\node at (2.5,-1.7) { $p-q=-1$, and $j+k=5$};
\node at (2.5,-2.4) { is one of the red blocks.};

\end{scope}

\end{tikzpicture}
\caption{An illustration of iterated block conditions~\textit{\ref{item:C2}}. The sum of the same colored (or the same numbered) blocks should be a zero-sum matrix due to one of the conditions in~\textit{\ref{item:C2}} of Theorem~\ref{thm:ags} (i) for $c=3$ when $\bm{n_v}=(4,0,\bm{n_{v_\theta}})$, (ii) for $d=2$ when $\bm{n_v}=(0,4,\bm{n_{v_\theta}})$, and (iii) for $c=3$ and $d=-1$ when $\bm{n_v}=(4,3,\bm{n_{v_\theta}})$.}\label{fig:block_condition_visual}
\end{figure}

\begin{remark}[Caveat on the size of $\Gamma$]
    For a function $p(\bm{\theta})$ of representation-size vector $\bm{n_p}$, there are Gram representations of sizes $\left\llbracket\overline{\bm{n_p}}\right\rrbracket$ for any $\overline{\bm{n_p}}\ge {\bm{n_p}}$. For instance, a Gram representation of $v(t,\theta)$ in Example~\ref{examp:block_example} of size $\left\llbracket(1,0,2)\right\rrbracket$ is given by
    \begin{eqnarray*}
        (t^2+1)(1-\cos\theta)&=&\psi_{1,0,2}(t,\theta)^\dagger\left[
                \begin{array}{ccc|ccc}
                1 & -1/2 & 0 & 0 & 0 & 0\\
                -1/2 & 0 & 0 & 0 & 0 & 0\\ 
                0 & 0 & 0 & 0 & 0 & 0 \\ \hline
                0 & 0 & 0 & 1 & -1/2 & 0\\
                0 & 0 & 0 & -1/2 & 1 & 0\\
                0 & 0 & 0 & 0 & 0 & 0 
                \end{array}
            \right]\psi_{1,0,2}(t,\theta).
    \end{eqnarray*}
    In particular, for $\gamma(\bm{\theta})$ defined in Theorem~\ref{thm:ags}, for any $\overline{\bm{n_\gamma}}\ge \bm{n_\gamma}=(0,0,\bm{1})$, there is a Gram representation $\Gamma$, of size $\overline{\bm{n_\gamma}}$. For~\textit{\ref{item:C3}} to make sense, the sizes of $V$ and $\Gamma$ should coincide. The condition $\bm{n_v}\ge \bm{n_{\gamma}}$ ensures that such a comparison is possible, since we can always choose a Gram representation $\Gamma$ of size $\left\llbracket\bm{n_v}\right\rrbracket$.
\end{remark}

Theorem~\ref{thm:ags} does not cover the case when $n_{f_t}^{pol}=0$, that is, when the nonautonomous system~\eqref{eq:system_timevar} is trigonometric in $t$. This is so since the proof utilizes Lemma~\ref{lem:ags_polynomial} and the condition $n_{v_t}^{pol}\ge n_{f_t}^{pol}+1$ is essential in satisfying the integrability condition~\textit{\ref{item:Rintegral}}. However, since Lemma~\ref{lem:ags_polynomial} requires the system and $v(t,\bm\theta)$ to have the same behavior in $t$, we would necessarily require $n_{v_t}^{pol}=n_{f_t}^{pol}=0$ to accommodate this case. The result for the same is presented below.

\begin{theorem}[For systems trigonometric in $t$]\label{thm:ags_trig}
    Consider the nonautonomous system~\eqref{eq:system_timevar} such that all its components $f^{(l)}$ can be represented by matrices of size $\left\llbracket\bm{n_f}\right\rrbracket$ with $n_{f_t}^{pol}=0$. For a given $\bm{n_v}$ with $n_{v_t}^{pol}=0$ and $\varepsilon>0$, if there exist Gram matrices $V\ge 0$ and $W\ge 0$ of sizes $\left\llbracket\bm{n_v}\right\rrbracket$ and $\left\llbracket\bm{n_v}+\bm{n_f}\right\rrbracket$, respectively, satisfying \begin{enumerate}[label=(C\arabic*')]
        \item\label{item:C1'} the linear constraint \begin{eqnarray}\label{eq:LieRelatonDual_Gram_trig}
        \ch{0,m_2}{k}{n_w}{W+X^\dagger\left(\sum_{l=1}^n\left(D_{\theta_l}^\dagger V+VD_{\theta_l}\right)\otimes F^{(l)}-\sum_{l=1}^n V\otimes\left(D_{\theta_l}^\dagger F^{(l)}+F^{(l)}D_{\theta_l}\right)\right)X}\\
        \nonumber=\ch{0,m_2}{k}{n_v}{-D_t^\dagger V-V D_t},
    \end{eqnarray}
    for all $-(\bm{n_{v}}+\bm{n_{f}})\le (0,m_2,\bm{k})\le \bm{n_{v}}+\bm{n_{f}}$.
        \item\label{item:C2'} $\bm{1}^\top\left(\sum_{p-q=d} \operatorname{SubBlock}_{p,q}(V)\right)\bm{1}=0$ for each $d=-n_{v_t}^{rot},\ldots,n_{v_t}^{rot}$; and 
        \item\label{item:C3'} $V\ge \varepsilon\,\Gamma$, where $\Gamma$ is a Gram representation of $\gamma(\bm{\theta})=n-\sum_{i=1}^n \cos(\theta_i)$ of size $\left\llbracket\bm{n_v}\right\rrbracket$,
    \end{enumerate}
    then for every initial time $t_0$, the set of points $\bm{\theta_0}$ that do not satisfy $\lim_{t\to\infty}\bm\Phi(t;t_0,\bm{\theta_0})=\bm{0}$ has zero Lebesgue measure in $\mathbb{T}^n$.
\end{theorem}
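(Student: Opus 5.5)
The proof will mirror that of Theorem~\ref{thm:ags}, with Lemma~\ref{lem:ags_polynomial} in its trigonometric branch, that is, with the integrability condition~\ref{item:Tintegral} replacing~\ref{item:Rintegral}.

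First, since $V,W\ge 0$, Lemma~\ref{lem:PD_dumitrescu} shows that $v$ and $w$ are sums of squares and hence positive semidefinite. Both are trigonometric in $t$ because $n_{v_t}^{pol}=0$, and $\bm{n_w}=\bm{n_v}+\bm{n_f}$ has zero polynomial component. So they share the behavior of $\bm f$ in $t$.

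Next, I will specialize Proposition~\ref{prop:PD_condition_gram} to $n_{v_t}^{pol}=0$. There is then a single block, $\operatorname{Block}_{1,1}(V)=V$, and only $c=0$ occurs. The iterated condition~\eqref{eq:iterated_block_main_prop} therefore reduces exactly to~\ref{item:C2'}, which gives $v(t,\bm 0)=0$ for all $t$. Condition~\ref{item:C3'} gives $v\ge\varepsilon\gamma$, because $V-\varepsilon\Gamma\ge 0$ is a Gram representation of $v-\varepsilon\gamma$ and so, by Lemma~\ref{lem:PD_dumitrescu}, $v-\varepsilon\gamma$ is SOS. By the last part of Proposition~\ref{prop:PD_condition_gram}, $v$ is then positive definite. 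Finally,~\ref{item:C1'} is~\eqref{eq:LieRelatonDual_Gram} with all indices $m_1$ restricted to $0$, which is the full index range since both sides have zero polynomial degree. Proposition~\ref{prop:density_condition} then yields~\eqref{eq:LieRelationDual}.

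The remaining step is~\ref{item:Tintegral}. Fix $r,t>0$. The set $\{\bm\theta\in\mathbb T^n:\|\bm\theta\|\ge r\}$ is compact, and on it $v(t,\bm\theta)\ge\varepsilon\gamma(\bm\theta)\ge\varepsilon\min_{\|\bm\theta\|\ge r}\gamma>0$, since $\gamma$ vanishes only at $\bm0$. The numerator $1+\|\bm f(t,\bm\theta)\|$ is continuous and therefore bounded there. Hence ${\widehat I_r}^{rot}$ is finite. I note that the lower bound on $v$ is uniform in $t$ because $\gamma$ does not depend on $t$, though~\ref{item:Tintegral} only needs finiteness for each fixed $t$.

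Lemma~\ref{lem:ags_polynomial}, via Lemma~\ref{lem:ags_general_trigo}, then gives almost-global convergence. That lemma also requires periodicity of $\bm f$ and $\rho=1/v$ with a common period $T$. Here every $t$-dependence is of the form ${\rm e}^{-im_2t}$ with $m_2\in\mathbb Z$, so $T=2\pi$ works for both.

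The main obstacle is the final conversion from ``almost all $(t_0,\bm\theta_0)$'' to ``for every $t_0$, almost all $\bm\theta_0$''. I would handle it with Fubini together with continuity of the flow. Because~\eqref{eq:system_timevar} is $2\pi$-periodic, the exceptional set is invariant under $t_0\mapsto t_0+2\pi$. The flow map $\bm\theta_0\mapsto\bm\Phi(s;t_0,\bm\theta_0)$ is a diffeomorphism, so it carries null sets to null sets. I would use it to transport the $\bm\theta_0$-slice at a good $t_0$, where the slice is null by Fubini, to an arbitrary $t_0$: flow from a good $t_0'<t_0$ forward to $t_0$, and note that convergence is preserved along trajectories. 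The same argument fills the analogous gap in Theorem~\ref{thm:ags}.
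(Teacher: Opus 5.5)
Your argument follows the paper's proof. You specialize the argument of Theorem~\ref{thm:ags} to $n_{v_t}^{pol}=0$, so that $\operatorname{Block}_{1,1}(V)=V$ and (C2') is the $c=0$ case of~\eqref{eq:iterated_block_main_prop}. You then obtain~\eqref{eq:LieRelationDual} from Proposition~\ref{prop:density_condition}, get the integrability condition from a positive lower bound on $v$ over the compact set $\{\|\bm\theta\|\ge r\}$, and conclude with Lemma~\ref{lem:ags_polynomial}. Your two additional checks are correct and fill in steps the paper leaves implicit: the common period $T=2\pi$ required by Lemma~\ref{lem:ags_general_trigo}, and the Fubini-plus-flow-diffeomorphism argument that upgrades ``almost all $(t_0,\bm\theta_0)$'' to ``every $t_0$, almost all $\bm\theta_0$''.

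There is, however, a step that fails, in your proof and in the paper's alike. Lemma~\ref{lem:ags_polynomial} needs $w$ to be \emph{nonzero}, since that is what gives $\rho_t+\operatorname{div}_{\bm\theta}(\rho\bm f)=w/v^2>0$ almost everywhere; you only establish $w\ge 0$. Nothing in $W\ge 0$ or (C1')--(C3') excludes $w\equiv 0$, and the statement is in fact false without this. Take $\bm f\equiv\bm 0$, $\bm{n_v}=(0,0,\bm 1)$, $V=\Gamma$ the positive semidefinite Gram representation coming from $\gamma=\tfrac12\sum_i|1-{\rm e}^{i\theta_i}|^2$, $W=0$, and $\varepsilon\le 1$. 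Then (C1') holds because both sides vanish ($F^{(l)}=0$ and $D_t=0$), (C2') holds because $\bm 1^\top\Gamma\bm 1=\gamma(\bm 0)=0$, and (C3') holds. Yet every $\bm\theta_0$ is an equilibrium. A non-trivial instance is $\bm f(\bm\theta)=(\sin\theta_2,-\sin\theta_1)$ with $v=\gamma$: again $w\equiv 0$, and the origin is a center.

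The remedy is to add the hypothesis $W\neq 0$, for example the constraint $\operatorname{Tr}W\ge\delta>0$. This costs nothing, since a feasible $(V,W)$ can be scaled by $\lambda\ge 1$. For $W\ge 0$ one has $w=\|W^{1/2}\psi_{\bm{n_v}+\bm{n_f}}\|^2$, and linear independence of the entries of $\psi_{\bm{n_v}+\bm{n_f}}$ shows that $w\equiv 0$ only if $W=0$. Requiring $W$ positive definite instead would make the SDP infeasible, because $w(t,\bm 0)=-v_t(t,\bm 0)=0$ is forced. With $W\neq 0$ added, the rest of your argument goes through. As in the paper, this reads $\|\bm\theta\|$ as the distance to $\bm 0$ on the torus, so that $\{\|\bm\theta\|\ge r\}$ is indeed compact.
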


\begin{proof}
Note that $n_{v_t}^{pol}=0$, thus $\operatorname{Block}_{1,1}(V)=V$. Thus, all the conditions~\textit{\ref{item:C1'}--\ref{item:C3'}} are their counterparts~\textit{\ref{item:C1}--\ref{item:C3}} in Theorem~\ref{thm:ags} for $n_{v_t}^{pol}=0$. Thus, all arguments except the integrability condition follow. Since $v(t,\bm{\theta})$ is positive definite, the integrand in~\textit{\ref{item:Tintegral}} is continuous on the compact subset $\{\bm{\theta}\in\mathbb{T}^n\colon \left\|\bm{\theta}\right\|\ge r \}$ of $\mathbb{T}^n$ for each fixed $t$, and the integrability condition~\textit{\ref{item:Tintegral}} is satisfied. The result follows from Lemma~\ref{lem:ags_polynomial}.    
\end{proof}

\section{Examples}\label{sec:examples}

In this section, we apply the theory developed in this paper to obtain Lyapunov densities for two nonautonomous systems on the hypertorus: one polynomial in $t$ and the other trigonometric in $t$. We use our program \texttt{NautLDT} solver~\cite{Tripathi_Software_Nonautonomous_Lyapunov_2026} based on MATLAB R2025b to check the feasibility of Theorem~\ref{thm:ags} and Theorem~\ref{thm:ags_trig}, respectively, and construct the functions $v(t,\bm{\theta})$ in both cases. All examples considered in this section arise from the phase-difference dynamics of Kuramoto models with time-varying coupling strength~\cite{guo2021overviews,petkoski2012} and general coupling functions~\cite{tripathi2026certification}. This is elaborated in detail in Example~\ref{examp:Kuramoto}.

\begin{example}[System polynomial in $t$]\label{examp:1}
    Consider the system $\dot{\theta}=-0.2\, t^2\, \sin\theta+0.4\, t\, \sin 2\theta=\vcentcolon f(t,\theta)$. The system has a Gram representation $F$ of size $\left\llbracket\bm{n_f}\right\rrbracket=\left\llbracket(1,0,2)\right\rrbracket=6$, which is a Hermitian matrix with \begin{equation*}
        F_{j,k}=\begin{cases}
            0.1\,i & \text{for }(j,k)=(4,5)\\
            -0.2\,i & \text{for }(j,k)=(1,6)\\
            0 & \text{for other }1\le j\le k\le 9
        \end{cases}\ .
    \end{equation*}
    The SDP developed in Theorem~\ref{thm:ags} is solved with $\bm{n_v}=(2,0,2)$ and $\varepsilon=0.1$ using \texttt{NautLDT} solver~\cite{Tripathi_Software_Nonautonomous_Lyapunov_2026}. The program returns a $9\times 9$ Hermitian matrix $V$ as below \begin{eqnarray*}
        V&=&\left[\begin{array}{ccc|ccc|ccc} 1.569 & -0.4336 & -1.1354 & -0.7438 & 0.1232 & 0.6205 & 0.0856 & -0.0012 & -0.0845\\ -0.4336 & 0.8671 & -0.4336 & 0.1233 & -0.2465 & 0.1233 & -0.0011 & 0.0024 & -0.0011\\ -1.1354 & -0.4336 & 1.569 & 0.6205 & 0.1232 & -0.7438 & -0.0845 & -0.0012 & 0.0856\\ \hline -0.7438 & 0.1233 & 0.6205 & 0.4659 & -0.2224 & -0.2435 & -0.0709 & 0.0506 & 0.0202\\ 0.1232 & -0.2465 & 0.1232 & -0.2224 & 0.4449 & -0.2224 & 0.0507 & -0.1013 & 0.0507\\ 0.6205 & 0.1233 & -0.7438 & -0.2435 & -0.2224 & 0.4659 & 0.0202 & 0.0506 & -0.0709\\ \hline 0.0856 & -0.0011 & -0.0845 & -0.0709 & 0.0507 & 0.0202 & 0.0134 & -0.0147 & 0.0014\\ -0.0012 & 0.0024 & -0.0012 & 0.0506 & -0.1013 & 0.0506 & -0.0147 & 0.0293 & -0.0147\\ -0.0845 & -0.0011 & 0.0856 & 0.0202 & 0.0507 & -0.0709 & 0.0014 & -0.0147 & 0.0134 \end{array}\right].
    \end{eqnarray*}
    The expression for $v(t,{\theta})$ is obtained using~\eqref{eq:gram_matrix} as \begin{eqnarray*}
        v(t,{\theta})&=&4.00506 - 3.46816 t + 1.72372 t^2 - 0.486329 t^3 + 0.0561109 t^4\\
        & & + (-1.73434 + 0.986117 t - 0.898896 t^2 + 0.405496 t^3 - 0.0588274 t^4)\cos\theta\\
        & & +(-2.27072 + 2.48204 t - 0.824822 t^2 + 0.0808333 t^3 + 0.0027206 t^4)\cos2\theta.
    \end{eqnarray*}
    Thus, $\rho(t,\theta)=1/v(t,\theta)$ is a Lyapunov density for the nonautonomous system. The corresponding density $\rho$ is illustrated in Figure~\ref{fig:example1&2} (left). For visual reference, the phase portrait of the associated skew-product system given by $\dot{t}=1$, $\dot{\theta}=f(t,\theta)$ is superimposed, providing insight into how the obtained density is distributed relative to the underlying flow.
\end{example}

\begin{figure}[ht]
		\centering
		\includegraphics[width=0.47\linewidth]{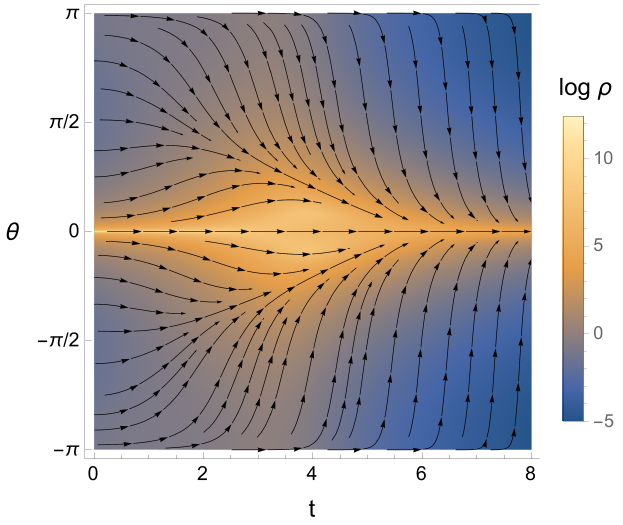}\hspace{20pt}
		\includegraphics[width=0.47\linewidth]{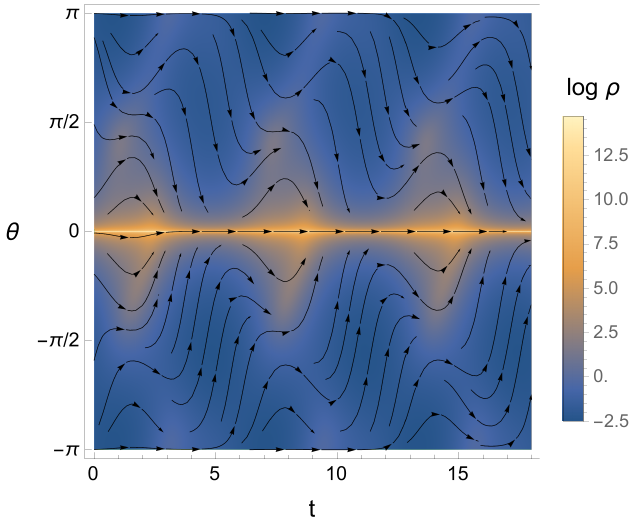}
		\caption{Phase portraits of associated skew-product flow $\dot{t}=1$, $\dot{\theta}=f(t,\theta)$ and logarithmic plot of the density obtained in Example~\ref{examp:1} (left) and Example~\ref{examp:2} with $\bm{\xi}=\bm{1}$ (right).}
		\label{fig:example1&2}
\end{figure}   
 
We now present an example to demonstrate the applicability of Theorem~\ref{thm:ags_trig} to sinusoidal Kuramoto models with periodic coupling, the class studied in~\cite{petkoski2012}.

\begin{example}[Almost-global synchronization of Kuramoto models with periodic coupling]\label{examp:Kuramoto}
    Consider the system \begin{equation}\label{eq:Kuramoto}
        \begin{split}
            \dot{\theta}_1 &=-(1+0.5\sin t)\sin(\theta_1-\theta_2)\\
            \dot{\theta}_2 &=-(1+0.5\sin t)\sin(\theta_2-\theta_1)-(2+0.3\cos t)\sin(\theta_2-\theta_3)\\
            \dot{\theta}_3&=-(2+0.3\cos t)\sin(\theta_3-\theta_2),
        \end{split}
    \end{equation}
    which is a Kuramoto model with interconnection-dependent time-periodic coupling strengths.
    The system is said to exhibit almost-global synchronization if almost all solutions $\bm\Phi(t;t_0,\bm{\theta_0})$ satisfy $\lim_{t\to\infty}\Phi_i(t;t_0,\bm{\theta_0})-\Phi_j(t;t_0,\bm{\theta_0})=0$ for $i,j\in\{1,2,3\}$. Introducing the phase difference variables $\varphi_1=\theta_1-\theta_2$ and $\varphi_2=\theta_2-\theta_3$, we obtain the system \begin{equation}\label{eq:Kuramoto_phdif}
        \begin{split}
            \dot{\varphi}_1&=-2(1+0.5\sin t)\sin\varphi_1-(2+0.3\cos t)\sin\varphi_2\\
            \dot{\varphi}_2&=(1+0.5\sin t)\sin\varphi_1-2(2+0.3\cos t)\sin\varphi_2,
        \end{split}
    \end{equation}
    which is a trigonometric time-varying system. It is known that the almost-global synchronization of~\eqref{eq:Kuramoto} can be established by verifying almost-global stability of the phase-difference system~\eqref{eq:Kuramoto_phdif}. The SDP developed in Theorem~\ref{thm:ags_trig} is solved with $\bm{n_v}=(0,4,[2,2])$ and $\varepsilon=0.1$ using \texttt{NautLDT} solver~\cite{Tripathi_Software_Nonautonomous_Lyapunov_2026}. The program returns a matrix $V$ of size $\left\llbracket(0,4,[2,2])\right\rrbracket=45$, ensuring almost-global stability of~\eqref{eq:Kuramoto_phdif}. The expressions of $V$ and $v(t,\theta)$ are not provided here due to space constraints, but can be computed using \texttt{NautLDT} solver~\cite{Tripathi_Software_Nonautonomous_Lyapunov_2026}. The plot of the trajectories of \eqref{eq:Kuramoto_phdif}, starting at $100$ random initial conditions at time $t=0$, is given in Figure~\ref{fig:example3d}.   \begin{figure}[ht]
		\centering
		\includegraphics[width=0.8\linewidth]{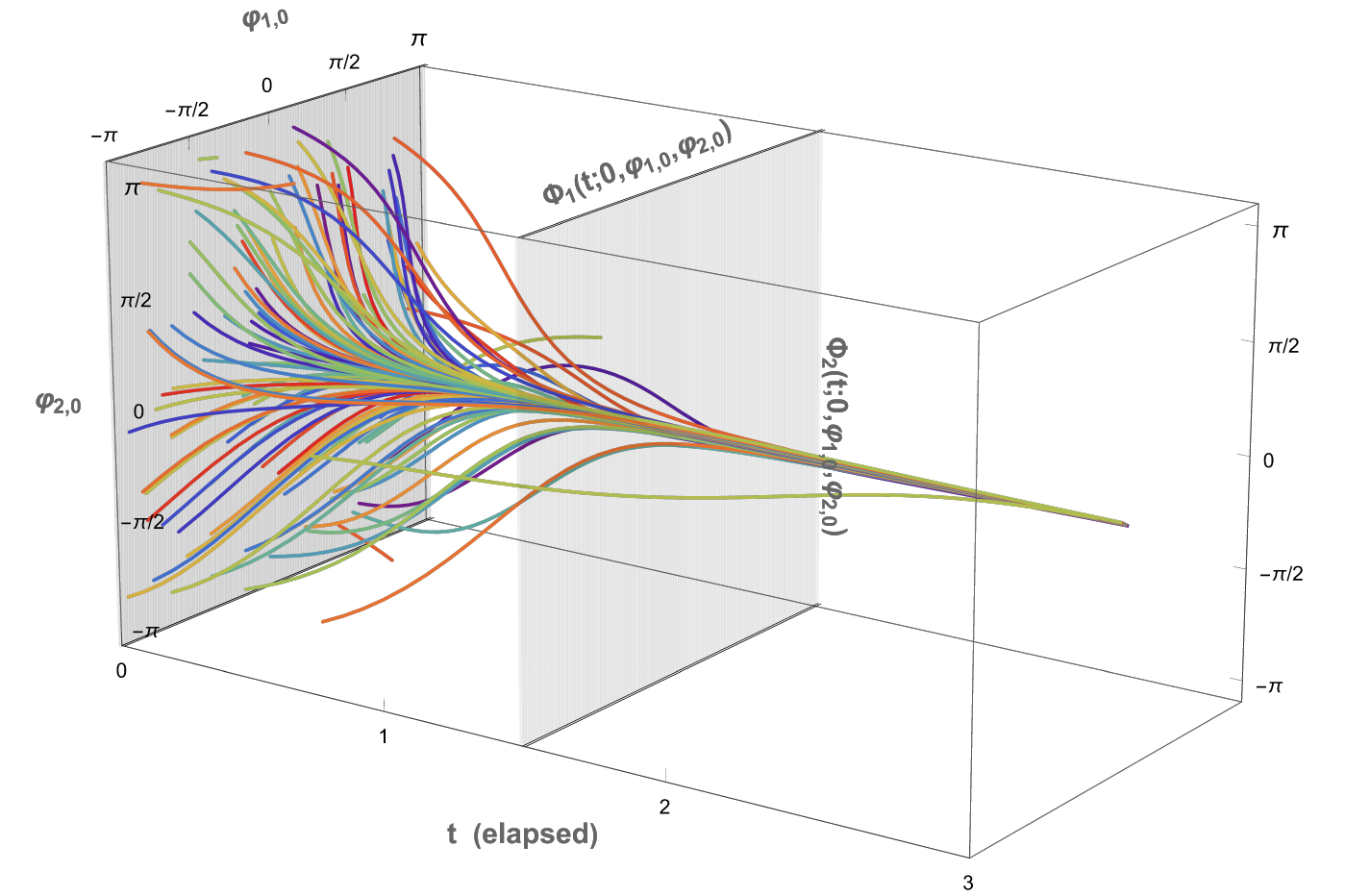}
		\caption{Plot of trajectories of $100$ random initial conditions $(0,\varphi_{1,0},\varphi_{2,0})$ for the phase-difference system~\eqref{eq:Kuramoto_phdif} in Example~\ref{examp:Kuramoto}. All of these trajectories converge to the origin.}
		\label{fig:example3d}
\end{figure}   
\end{example}

\section{Application to robust almost-global stabilization}\label{sec:robust}
We next show how the proposed density-certificate framework can be used to achieve robust almost-global stabilization of nonautonomous systems on the hypertorus. The motivation comes from parameter-varying systems~\cite{briat2014linear}, in which the system depends on uncertain or time-varying quantities, such as noise~\cite{acebron2000uncertainty} and coupling strength~\cite{guo2021overviews,petkoski2012}.

Consider a parameter-varying oscillator system of the form \begin{eqnarray}
    \dot{\bm{\theta}}=\bm{f}(t,\bm{\theta},\bm\xi(t)),
\end{eqnarray}
where $\bm\xi(t)$ denotes an uncertain disturbance signal taking values in a compact set $\Xi$. The objective is to certify convergence of almost all trajectories to the desired equilibrium, uniformly over all admissible parameter trajectories. When the system depends affinely on the uncertain parameter and $\Xi$ is a polytope, the robust verification problem reduces to a finite set of SDP constraints. Indeed, if \begin{eqnarray}
    \dot{\bm\theta}=\bm{f}(t,\bm{\theta},\bm{\xi})=\bm{f_0}(t,\bm\theta)+\sum_{i=1}^p\xi_i\bm{f_i}(t,\bm\theta),
\end{eqnarray}
it is sufficient to obtain a common density $\rho(t,\bm{\theta})$ satisfying the density inequality~\eqref{eq:monzon} simultaneously for systems at the vertices of $\Xi$, similar to the Lyapunov function case~\cite{briat2014linear}. This gives a computationally tractable robust stabilization certificate.

\begin{example}[Robust almost-global stabilization]\label{examp:2}
        Consider the system $\dot{\theta}=-\xi_1\sin\theta+\xi_2\sin t\sin\theta+\cos t\sin 2\theta=\vcentcolon f(t,\theta,\bm{\xi})$, where $\bm{\xi}=(\xi_1,\xi_2)\in[\underline{\xi_1},\overline{\xi_1}]\times[\underline{\xi_2},\overline{\xi_2}]$. The parameter-varying system has a Gram representation of size $\left\llbracket\bm{n_f}\right\rrbracket=\left\llbracket\right(0,1,2)\rrbracket=6$, that is \begin{eqnarray}\label{eq:Gram_rep_PV}
        f(t,\theta,\bm{\xi})&=&\psi_{\bm{n_f}}^{rot}(t,\bm{\theta})^\dagger\,\left[\begin{array}{ccc:ccc}
    0 & i\xi_1/2 & 0 & 0 & -\xi_2/4 & -i/4\\
    -i\xi_1/2 & 0 & 0 & \xi_2/4 & 0 & 0\\ 
    0 & 0 & 0 & i/4 & 0 & 0\\ \hdashline
    0 & \xi_2/4 & -i/4 & 0 & 0 & 0 \\
    -\xi_2/4 & 0 & 0 & 0 & 0 & 0\\
    i/4 & 0 & 0 & 0 & 0 & 0\end{array}\right]\,\psi^{rot}_{\bm{n_f}}(t,\bm{\theta}).
\end{eqnarray}
For $\bm\xi=\bm{1}$, the SDP developed in Theorem~\ref{thm:ags_trig} is solved with $\bm{n_v}=(0,2,3)$ and $\varepsilon=0.1$ using \texttt{NautLDT} solver~\cite{Tripathi_Software_Nonautonomous_Lyapunov_2026}. The program returns a matrix $V$ of size $\left\llbracket(0,2,3)\right\rrbracket=12$. The expressions of $V$ and $v(t,\theta)$ are not provided here due to space constraints, but can be computed using \texttt{NautLDT} solver~\cite{Tripathi_Software_Nonautonomous_Lyapunov_2026}. The plot for the density $\rho=1/v$ obtained is given in Figure~\ref{fig:example1&2} (right).

Fixing this $\bm{n_v}$ and $\varepsilon$, Algorithm~\ref{algo:PVsystems} was run in the \texttt{NautLDT} solver~\cite{Tripathi_Software_Nonautonomous_Lyapunov_2026} to find a common density for systems on the vertex, which are, 
$f(t,\theta,\underline{\xi_1},\underline{\xi_2})$, 
$f(t,\theta,\underline{\xi_1},\overline{\xi_2})$, 
$f(t,\theta,\overline{\xi_1},\underline{\xi_2})$ and
$f(t,\theta,\overline{\xi_1},\overline{\xi_2})$ by varying the values of the bounds $\underline{\xi_j}<1<\overline{\xi_j}$ for $j=1,2$. The largest set (with endpoints up to two decimal places) for which a common density exists was obtained to be $[\underline{\xi_1},\overline{\xi_1}]\times [\underline{\xi_2},\overline{\xi_2}]=[0.8,4.07]\times[0.18,1]$. Thus, almost all solutions of the system $\dot{\theta}=f(t,\theta,\bm{\xi}(t))$ converge to the origin as long as the uncertain parameter $\bm\xi(t)$ stays within this set.

\begin{algorithm}
\caption{Robust almost-global stabilization of $\dot{\bm{\theta}}=\bm{f}(t,\bm{\theta},\xi_1,\ldots,\xi_p)$ via \texttt{NautLDT} solver~\cite{Tripathi_Software_Nonautonomous_Lyapunov_2026}.}\label{algo:PVsystems}
\begin{algorithmic}[1]
\State \textbf{cvx\textunderscore begin}
\State Fix $\underline{\xi_j}$ and $\overline{\xi_j}$ for $j=1,\ldots,p$. Any vertex system has the form $f(t,\bm{\theta},a_1,\ldots,a_p)$ where $a_j=\underline{\xi_j}$ or $\overline{\xi_j}$.
\State Fix Gram representations $F_{(1)},\ldots,F_{(2^p)}$ for vertex systems of size $\left\llbracket\bm{n_f}\right\rrbracket$.
\State Initialize SDP matrix variable $V$ of size $\left\llbracket\bm{n_v}\right\rrbracket$.
\State Initialize SDP matrix variables $W_{(1)},\ldots,W_{(2^p)}$ of sizes $\left\llbracket\bm{n_v}+\bm{n_f}\right\rrbracket$ each.
\State Fix $\varepsilon=0.1$.\Comment{Proposition~\ref{prop:PD_condition_gram}}
\State \hspace{1.5em} {Minimize}$(0)$
\State \hspace{1.5em} subject to
\State \hspace{3em} $V\ge \varepsilon \Gamma$ \Comment{$\Gamma$ as in Theorem~\ref{thm:ags_trig}: to enforce $v\ne 0$ when $\bm\theta\ne\bm{0}$}
\State \hspace{3em} $W_{(1)},\ldots,W_{(2^p)}\ge 0$
\State \hspace{1.5em} \textbf{for} $i=1,\ldots,2^p$
\State \hspace{3em} \textbf{for} {$-(\bm{n_{v}}+\bm{n_{f}})\le (0,m_2,\bm{k})\le \bm{n_{v}}+\bm{n_{f}}$}
    \State \hspace{4.5em}  $\ch{0,m_2}{k}{n_w}{W_{(i)}+X^\dagger\left(\sum_{l=1}^n\left(D_{\theta_l}^\dagger V+VD_{\theta_l}\right)\otimes F_{(i)}^{(l)}-\sum_{l=1}^n V\otimes\left(D_{\theta_l}^\dagger F_{(i)}^{(l)}+F_{(i)}^{(l)}D_{\theta_l}\right)\right)X}$
    \Statex \hspace{25em}$=\ch{0,m_2}{k}{n_v}{-D_t^\dagger V-V D_t}$
\State \hspace{3em} \textbf{end for}\Comment{equation~\eqref{eq:LieRelatonDual_Gram_trig} for $i^{th}$ vertex system.}
\State \hspace{1.5em} \textbf{end for}\Comment{common density for all vertex systems.}
\State \hspace{1.5em} \textbf{for $d=-n_{v_t}^{rot},\ldots,n_{v_t}^{rot}$}
\State \hspace{3em} $\bm{1}^\top\left(\sum_{p-q=d} \operatorname{SubBlock}_{p,q}(V)\right)\bm{1}=0$ \Comment{to enforce $\tilde{v}\equiv 0$ for $\bm\theta=\bm{0}$}
\State \hspace{1.5em} \textbf{end for} 
\State \textbf{cvx\textunderscore end}
\State \textbf{if} cvx\textunderscore status=solved
\State return $V$, $W_{(1)}, \ldots, W_{(2^p)}$.
\end{algorithmic}
\end{algorithm}

To illustrate the result numerically, we generate $100$ random initial conditions $\left\{\big(t_0^{(j)},\theta_0^{(j)}\big),\ j=1,\ldots,100\right\}$, drawn independently and uniformly from the set $[0,10]\times[-\pi,\pi]$. For each experiment $j$, an independent realization of the time-varying parameter vector $\boldsymbol{\xi}^{(j)}(t)=\bigl(\xi_1^{(j)}(t),\xi_2^{(j)}(t)\bigr)$ is generated as follows. At the sampling instants $t_k=2k$, $k=0,1,\ldots,13$, the values $\xi_1^{(j)}(t_k)$ and $\xi_2^{(j)}(t_k)$ are selected independently from the uniform distributions on $[0.8,4.07]$ and $[0.18,1]$, respectively. The parameter functions $\xi_1^{(j)}(t)$ and $\xi_2^{(j)}(t)$ are then defined on the $[0,26]$ by piecewise linear interpolation between consecutive sampling instants.

\begin{figure}[ht]
		\centering
		\includegraphics[width=0.8\linewidth]{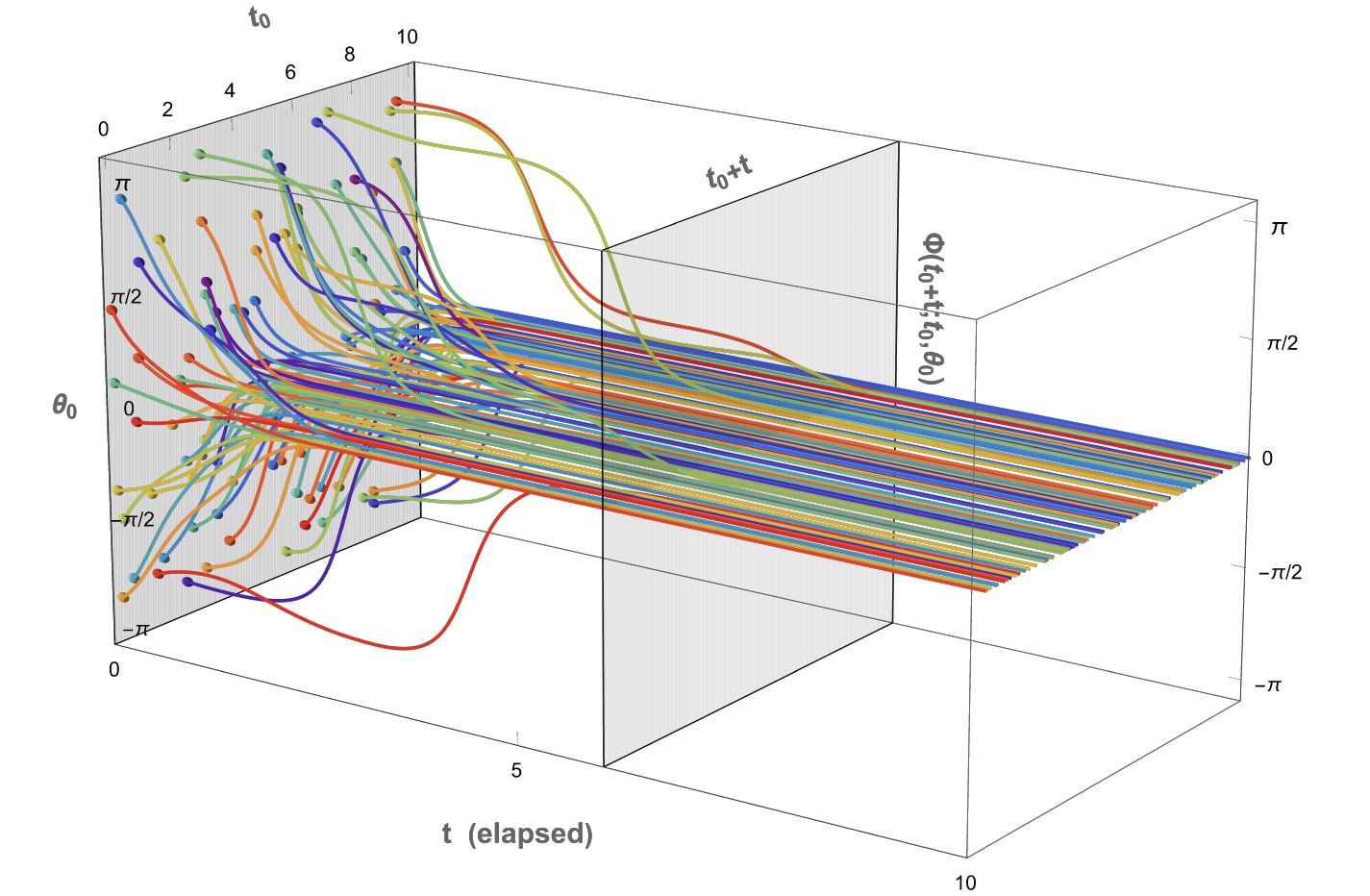}
		\caption{Trajectories of the system $\dot\theta=-\xi_1(t)\sin\theta+\xi_2(t)\sin t\sin\theta+\cos t\sin 2\theta$ for 100 randomly generated initial conditions and time-varying parameters $\xi_1(t)\in[0.8,4.07]$ and $\xi_2(t)\in[0.18,1]$ in Example~\ref{examp:2}. The initial condition plane $(t_0,\theta_0)$ is shown at $t=0$, and the depth of each trajectory represents the evolution of $\Phi(t_0+t;t_0,\theta_0)$ under distinct time-varying parameters.}
		\label{fig:random_over_random_interpolated}
\end{figure}   

The plot in Figure~\ref{fig:random_over_random_interpolated} shows the 100 trajectories of the nonautonomous scalar system $\dot\theta=-\xi_1(t)\sin\theta+\xi_2(t)\sin t\sin\theta+\cos t\sin 2\theta$, where each experiment uses the random time-varying parameters $\xi_1(t)$ and $\xi_2(t)$ obtained above. Each trajectory starts from a randomly chosen initial condition $(t_0,\theta_0)$ obtained above, represented by a colored dot on the initial state-time plane. The horizontal axis shows elapsed time $t$, the vertical axis shows the initial time $t_0$, and the depth axis shows the evolving state $\Phi(t_0+t;t_0,\theta_0)$. Thus, the figure visualizes how solutions evolve under different initial conditions and different realizations of the random time-dependent parameters, revealing the overall asymptotic behavior and robustness of the dynamics with respect to both initial conditions and parameter variations.

\end{example}

\section{Conclusion}

We develop an SDP for constructing Lyapunov densities for nonautonomous systems on the hypertorus with polynomial, trigonometric, or mixed polynomial-trigonometric time dependence. The theory developed is used to obtain a Lyapunov density for a Kuramoto model with interconnection-dependent time-periodic coupling strengths, and is also applied to robust stability of vector fields that are affine in the uncertain parameter. In the latter case, infinitely many density inequalities are reduced to finitely many using vertex vector fields, which is a standard convexity argument, and a computational search is performed to find a common Lyapunov density for all vertex systems simultaneously on the hypertorus. 

The methodology can be extended to nonautonomous systems defined on hypercylinders by leveraging the framework in this paper and~\cite{tripathi_hybrid}; and by proposing an SDP constraint that enforces the integrability conditions. The theory developed here could also be adapted to study linear time-varying systems on Euclidean spaces. 

\section{Availability of Data}

The program supporting the findings of this study is openly available on GitHub~\cite{Tripathi_Software_Nonautonomous_Lyapunov_2026}.

\section*{Conflict of Interest}
The authors have no conflicts of interest to declare that are relevant to the content of this article.

\section*{Acknowledgements}             
This work was supported by the TUBITAK 1001 Research fund [grant number: 122E522].

\bibliographystyle{plain}
\bibliography{mybibfile}
\end{document}